\documentclass[11pt]{article}

\usepackage[margin=1in]{geometry}
\usepackage{graphicx} 
\usepackage{hyperref}   
\hypersetup{
	colorlinks=true,
	linkcolor=blue,
	citecolor=blue,
	urlcolor=blue,
}
\usepackage[square,numbers]{natbib}
\usepackage{xcolor}
\usepackage{amsthm}
\usepackage{algorithm}
\usepackage{algpseudocode}
\usepackage{makecell}     
\usepackage{nicematrix}
\usepackage{graphicx}
\usepackage{amsfonts}
\usepackage{siunitx}
\usepackage{tabularray}
\usepackage{amsmath,mathtools,amssymb,amsfonts}

\newcommand{\norm}[1]{\left\lVert#1\right\rVert}
\newcommand{\xddots}{%
  \raise 4pt \hbox {.}
  \mkern 6mu
  \raise 1pt \hbox {.}
  \mkern 6mu
  \raise -2pt \hbox {.}
}

\newtheorem{theorem}{\bf{Theorem}}

\newtheorem{remark}{\bf{Remark}}
\newtheorem{definition}{Definition}
\newcommand{\keywords}[1]{\textbf{Keywords:} #1}

\newcommand{\assumption}[1]{\textbf{Assumption} #1}

\title{Disturbance-Adaptive Finite-Time Control of Three-Phase Rectifiers}
\author{Koto Omiloli, Satish Vedula, Ayobami~Olajube and Olugbenga Moses Anubi}

\date{}

\begin{document}

\maketitle
\begin{center}
Department of Electrical and Computer Engineering\\
Center for Advanced Power Systems, Florida State University\\
E-mail: \{kao23a, svedula, aolajube, oanubi\}@fsu.edu
\end{center}

\section*{Abstract}
Three-phase AC--DC rectifiers are fundamental components in modern power electronics systems, yet achieving rapid voltage regulation and precise current tracking under load and grid disturbances remains challenging due to nonlinear dynamics and measurement uncertainties. This paper presents a finite-time control method for three-phase AC--DC rectifiers that achieves millisecond-scale regulation of DC-link voltage and grid currents under varying conditions. The proposed design employs a transformed augmented error-state dynamics model, extending the voltage dynamics to a two-state system to construct an adaptive sliding surface that guarantees fast finite-time convergence. A nonlinear sliding-mode voltage regulator with an online disturbance estimator ensures rapid and robust voltage tracking, while a fast current controller achieves finite-time $dq$-axis current tracking with minimal chattering. Theoretical results establishes finite-time stability and provides explicit gain selection conditions. Simulation results demonstrate up to 99.40\% and 87.5\% reductions in voltage and current convergence times, respectively, compared to conventional robust controllers. Laboratory experiments further validate the approach, showing 33.33\% lower voltage ripple, 33.33\% faster rise time, and 32.43\% reduced steady-state error relative to a recent method. These results confirm improvements in transient performance, convergence, and overall system stability, highlighting the method’s practical applicability for high-performance rectifier control.

\keywords{Chattering, Convergence, Finite-
time, Rectifier, Sliding Mode, Surface, Terminal.}

\section{Introduction}

Power converters are essential for the exchange, processing, and control of electrical energy, enabling transformation between alternating current (AC) and direct current (DC) supplies. Modern converters often include nonlinear elements, such as high-frequency switching power semiconductors, and reactive components, such as capacitors and inductors, for energy storage and filtering. In microgrids, these converters play a critical role in ensuring power quality and delivering electrical energy in suitable forms for loads \cite{jamil2009microgrid}. As converter penetration increases in power networks, high-performance control solutions are required to stabilize voltage and current states to improve dynamic response.

Among power converters, AC–DC rectifiers are widely used in industrial equipment, electronic devices, battery chargers, and motor drives. However, without robust control, these systems remain susceptible to disturbances such as voltage sags, load variations, and parameter uncertainties \cite{Shiming}. Conventional linear proportional-integral (PI) controllers are often employed due to their simplicity and effectiveness, including in cascaded or proportional-resonant architectures \cite{guler2023simplified}. Nevertheless, such controllers may struggle in fast-changing environments, exhibiting slow response or steady-state error under abrupt disturbances.

To improve dynamic performance, nonlinear control approaches have been explored. Feedback-based controllers have been proposed for AC-connected DC microgrids \cite{iovine2016nonlinear}, and super-capacitor-assisted schemes have been used to maintain voltage stability. While effective under small disturbances, these methods can exhibit significant delays or sensitivity to large-scale perturbations \cite{jamil2009microgrid}. Sliding-mode control (SMC) has also been widely applied due to its robustness against external disturbances and parameter variations \cite{feng2002non}. Advanced designs integrate extended state observers or super-twisting algorithms (STA) for current and voltage regulation \cite{liu2020sliding}, yet they often require precise parameter tuning, limiting practical implementation.

Nonlinear disturbance-observer-based multiple-surface controllers and finite-time backstepping strategies have further improved convergence and robustness \cite{zolfaghari2019power, ding2024fixed} of converters. However, these methods either rely heavily on accurate models, require specific initial conditions, or achieve only asymptotic convergence, which may not be sufficient for rectifier control applications requiring millisecond-scale response. Model Predictive Control (MPC) has been employed to optimize rectifier operation, minimizing voltage error, current ripple, and power losses \cite{olajube2024decentralized}. Despite its advantages, MPC often imposes high computational burdens, motivating research into computationally efficient implementations \cite{xu2021computationally, rath2024reduced}.

Recent studies have also explored hybrid, cooperative, and intelligent control strategies that combine the strengths of multiple control paradigms to enhance system robustness and steady-state accuracy. Examples include dissipative-based control for energy-shaping and damping injection \cite{anubi2022robust}, non-singular terminal sliding mode control for finite-time state regulation without singularity issues, and neural-network-based controllers for online adaptation to nonlinear and uncertain system dynamics \cite{li2013artificial,  ding2024passivity}. These approaches have demonstrated improved robustness to parameter variations and better steady-state voltage regulation compared to classical linear controllers. However, despite their effectiveness, most of these methods rely on asymptotic, which may not guarantee rapid transient recovery under abrupt load changes or grid frequency fluctuations. Furthermore, data-driven and hybrid designs often involve high computational complexity, sensitivity to network training quality, or require precise model knowledge to achieve consistent performance. Consequently, achieving ultra-fast convergence of rectifier states while maintaining robustness, simplicity, and adaptability remains an open and technically demanding problem in the control of power converters.

Although power-loss minimization is often a key control objective in converter systems, the ability to ensure fast convergence of voltage and current states is equally, if not more, critical to achieving high power quality and stable operation. In grid-connected rectifiers, a slow transient response can lead to extended voltage dips, current overshoot, and oscillations that adversely affect both the DC-link and the upstream AC grid. Moreover, delayed settling times amplify harmonic distortion and reactive power fluctuations, indirectly increasing conduction and switching losses in semiconductor devices. Therefore, designing controllers that enable rapid voltage and current regulation is essential for minimizing transient energy exchange, maintaining tight voltage stability under load variations, and preventing overcurrent stress on hardware components.

Recent advances in MPC, passivity-based control, and SMC have improved rectifier regulation, yet few works explicitly ensure fast finite-time convergence under load uncertainties. Notably, \cite{fu2023finite} achieved ~5 ms convergence for three-phase rectifiers, but the controller relied on fixed gains, limiting robustness to time-varying loads and grid fluctuations. To overcome these limitations, this paper introduces a disturbance-adaptive finite-time control framework for three-phase rectifiers. The method guarantees fast convergence of voltage and current states without ideal-grid assumptions or extensive tuning. An adaptive law embedded in the finite-time sliding surface provides online compensation for unknown load disturbances while keeping current chattering low. The design builds on a transformed augmented error-state model that extends the voltage dynamics to a two-state form, enabling an adaptive sliding surface that ensures finite-time voltage convergence. A complementary fast current regulator achieves finite-time grid-current tracking with smooth transients.

The proposed control scheme achieves millisecond-scale convergence of rectifier states, leading to faster voltage recovery, improved transient stability, and stronger disturbance resilience compared to conventional robust controllers. The main contributions of this work are as follows:

\begin{enumerate}
\item A disturbance-adaptive reaching law for voltage regulation, enabling near-ideal DC voltage tracking within milliseconds.
\item Finite-time convergence analysis and explicit gain selection guidelines for the voltage controller.
\item Design of a fast current regulator with explicitly defined gains, ensuring rapid convergence of grid currents while suppressing chattering.
\item Experimental validation on a physical rectifier circuit, confirming the proposed method’s superior performance.
\end{enumerate}

\section{Notations and Definitions}  
The sets of real and positive real numbers are denoted by $\mathbb{R}$ and $\mathbb{R}_+$, respectively. Scalars are represented by lowercase letters ($x \in \mathbb{R}$) and vectors by bold lowercase letters ($\mathbf{x} \in \mathbb{R}^n$). The zero vector in $\mathbb{R}^n$ is denoted by $\mathbf{0}_n$.  

The skew-symmetric matrix $
J =
\begin{bmatrix}
0 & 1 \\ -1 & 0
\end{bmatrix}
$. For $\mathbf{x} \in \mathbb{R}^n$ and $\alpha \in \mathbb{R}$, $\mathbf{x}^\alpha$ denotes element-wise exponentiation. The Euclidean norm of $\mathbf{x}$ is given by  $
\|\mathbf{x}\|_2 = \sqrt{\sum_{i=1}^{n} |x_i|^2},
$ and $|x|$ denotes the absolute value of $x \in \mathbb{R}$.

\begin{definition}[Signum function]
For $x \in \mathbb{R}$,
\[
\textsf{sgn}(x) =
\begin{cases}
-1, & x < 0,\\
0,  & x = 0,\\
1,  & x > 0.
\end{cases}
\]
\end{definition}

\begin{definition}[Saturation function]
The saturation function $\textsf{sat}: \mathbb{R}^n \to \mathbb{R}^n$ is defined component-wise as
\[
\textsf{sat}(s)_i =
\begin{cases}
s_i, & |s_i| \leq 1,\\
\textsf{sgn}(s_i), & |s_i| > 1,
\end{cases}
\quad i = 1, \dots, n.
\]
\end{definition}

$\mathcal{L}_\infty$ denotes the set of measurable function $f: \mathbb{R}_+ \to \mathbb{R}$ bounded as follows:
\[
\|f\|_\infty = \underset{t \in \mathbb{R}_+}{\textsf{ess sup}}\, |f(t)| < \infty.
\]

 \section{Problem Formulation}
The control objective is to regulate both the AC currents and DC-link voltage of the converter while ensuring finite-time convergence and robustness against disturbances. The control is implemented in the $dq$ reference frame, where sinusoidal AC quantities are transformed into DC equivalents, simplifying regulation \cite{liu2020sliding}.  

The converter dynamics in the $dq$ frame are expressed as
\begin{equation}
\begin{aligned}\label{power_converter}
l \frac{d\mathbf{i}}{dt} &= -r \mathbf{i} + \omega_g l J \mathbf{i} + \mathbf{v} - \mathbf{u}v_{dc}, \\
c \frac{dv_{dc}}{dt} &= \mathbf{u}^\top \mathbf{i} - i_l,
\end{aligned}
\end{equation}
where $\mathbf{i} = [\,i_d\; i_q\,]^\top \in \mathbb{R}^2$ and $\mathbf{v} = [\,v_d\; v_q\,]^\top \in \mathbb{R}^2$ are the converter currents and voltages in the $dq$ frame, respectively. The DC-link voltage is $v_{dc} \in \mathbb{R}$, and $\mathbf{u} = [\,u_d\; u_q\,]^\top \in \mathbb{R}^2$ denotes the control input. The parameters $l$, $r$, and $c$ represent the line inductance, resistance, and DC-link capacitance, while $\omega_g \in \mathbb{R}_+$ denotes the grid angular frequency.

The following assumptions are made in this work to simplify the analysis and control design:\\
\assumption{\textbf{1:}
The AC source voltages and currents are assumed to be balanced.} \\
\begin{assumption}\textbf{2:}
 The control design is based on the assumption of complete knowledge of the rectifier dynamics.
\end{assumption} \\
\begin{assumption}\label{disturbance_assumption}\textbf{3:}
   The resistive load disturbance, $\rho(t)$ is piece-wise constant and satisfies $|\rho(t)|\le \delta$ and $|\dot{\rho}(t)|\le \epsilon$ for some known $\delta>0$ and $\epsilon>0$. 
\end{assumption}

\section{CONTROL DEVELOPMENT}
The proposed control structure consists of two cascaded loops: an outer voltage control loop and an inner current control loop. The voltage control loop regulates the DC-link voltage $v_{dc}$ to its reference $v_{dc}^*$, generating the reference current $i_d^*$ for the inner loop, as shown in Fig.~\ref{AC-DC}. Both loops are driven by nonlinear regulators based on state feedback.

\begin{figure}[h]
    \centering \includegraphics[width=0.6\textwidth]{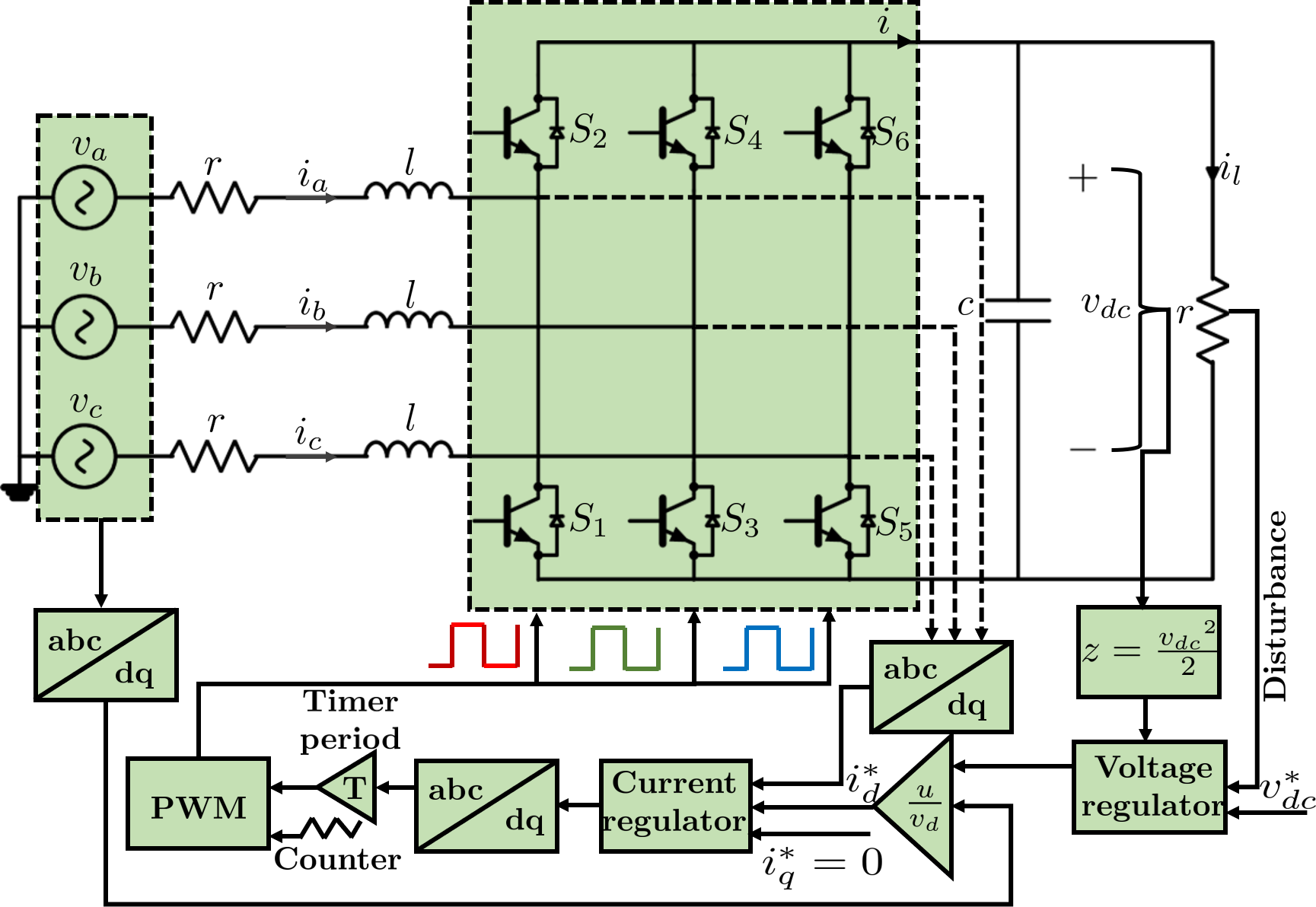}
    \caption{System-level circuit implementation of the proposed adaptive control scheme.}
    \label{AC-DC}
\end{figure}

\subsection{Voltage Regulation}\label{voltage_regulator}
The voltage dynamics from \eqref{power_converter}  can be expressed as:
\begin{equation}\label{voltage-dynamic}
c v_{dc} \frac{dv_{dc}}{dt} = u p_v - \rho(t),
\end{equation}
where  $\rho(t) =  {i}^2_lr$ accounts for the varying load connected to the output DC capacitor of the converter,  $p_v = v_di_d^* + v_qi_q^*$ is the known instantaneous power obtained from the $dq$ current references $(i_d^*, i_q^*)$ and input voltage $(v_d, v_q)$. The control input, $u \in (0, 1)$ is the focus of our design. 

To simplify the analysis, we introduce a change of variable $z = \frac{{v_{dc}}^2}{2}$, where $z$ represents the transformed dynamic variable. Substituting the derivative of $z$ into equation (\ref{voltage-dynamic}), the transformed expression is obtained as:
\begin{align} 
    c\dot{z}= u p_v - \rho \label{z_dot}.
\end{align}
Define the regulation error term as $\tilde{z}_2 = z^* - z $, where  $z^*$ is the transformed voltage reference, and let $\dot{\tilde{z}}_1 = \tilde{z}_2$. Consequently, the augmented error-state dynamics of (\ref{z_dot}) can be expressed as:   
\begin{align} 
\begin{aligned}
\begin{split}
    \dot{\tilde{z}}_1 = {}& \tilde{z}_2, \\
\end{split}\\
\begin{split}
     c\dot{\tilde{z}}_2 = {} & -up_v + \rho.\label{voltage_dyn}\\ 
\end{split}
\end{aligned} 
\end{align} 
The candidate sliding manifold is then proposed as: 
\begin{align}
s = k_1 \tilde{z}_2^{\frac{p}{q}} + \tilde{z}_1 = 0,
\label{sliding_surface}
\end{align}
where $p$ and $q$ are positive odd integers with $\frac{p}{q} \in (1,2)$, and $p > q$, $k_1 > 0$. Taking the first time derivative of both sides of the equation in (\ref{sliding_surface}) yields:
\begin{equation}\label{ss_derivative}
    \dot{s} =  k_1 \frac{p}{q}
    \tilde{z}_2^{\frac{p}{q}-1} \dot{\tilde{z}}_2 + \dot{\tilde{z}}_1.
\end{equation} 
It is sufficient to state that the condition guaranteeing the reachability of \eqref{sliding_surface} is \( s\dot{s} < -\eta |s| \), where \( \eta > 0 \). From \eqref{sliding_surface}, the system states reach the sliding mode, \( s = 0 \), at the time \( t_s = t_0 + T \), where:
\begin{equation}
T = k_1^{\frac{q}{p}} \int_{\tilde{z}_1(t_0)}^{0} \frac{d\tilde{z}_1}{|\tilde{z}_1|^{\frac{q}{p}}} = k_1^{\frac{q}{p}} \frac{p}{p-q} |\tilde{z}_1(t_0)|^{1-\frac{q}{p}}.
\label{T_equation}
\end{equation}
Here, \( \frac{q}{p} \in \left(\frac{1}{2}, 1\right) \). The open-loop dynamics can be obtained by substituting \eqref{voltage_dyn} into \eqref{ss_derivative}, which yields:
\begin{equation}
\dot{s} = \frac{k_1 p}{c q} \tilde{z}_2^{\frac{p}{q}-1} (-u p_v + \rho) + \tilde{z}_2.
\label{open_loop_dynamics}
\end{equation}

Now assuming $\dot{z}$ is not measurable then an estimate, $y$ can be obtained using the derivative filter: 
\begin{align}
 \dot{\eta}_f & = -\sigma \eta_f - \sigma^2 z \label{d1}, \\
 y & = \eta_f + \sigma z \label{d2},
\end{align}
where $\eta_f$ is the derivative filter state and $\sigma >> 0$. The following result provides the bound on the derivative filter error, $e(t) = \dot{z}(t) - \hat{\dot{z}}(t)$. \\

\begin{lemma} \textbf{1:} \label{filter_lemma}
Let $z: \mathbb{R}_+ \rightarrow \mathbb{R}$ be a function satisfying $\underset{t}{\sup}|\dot{z}(t)| \leq \gamma  $, $\underset{t}{\sup} |\ddot{z}(t)|\le \epsilon$. Then for the filter dynamics in \eqref{d1} and \eqref{d2}, there exists, $T < \infty$ such that the  
\begin{equation}
    |\dot{z}(t) - y(t)| \leq \frac{\epsilon}{\sigma},\hspace{2mm} \text{ for all }t > T.
\end{equation}
\end{lemma}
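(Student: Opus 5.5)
The plan is to derive an exact scalar error equation for the filter \eqref{d1}--\eqref{d2} and then bound its solution by variation of constants. Let $e(t)=\dot z(t)-y(t)$, where $y$ plays the role of $\hat{\dot z}$.

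\textbf{Step 1 (error dynamics).} Differentiating \eqref{d2} and substituting \eqref{d1} gives
\begin{equation*}
\dot y=-\sigma\eta_f-\sigma^2 z+\sigma\dot z=-\sigma(\eta_f+\sigma z)+\sigma\dot z=\sigma(\dot z-y).
\end{equation*}
Hence $\dot e=\ddot z-\sigma e$. This is a stable first-order linear system with time constant $1/\sigma$, driven by an input bounded by $\epsilon$. The hypothesis $\sup_t|\dot z|\le\gamma$ only guarantees that $e(t_0)$ is finite. By a slight abuse of notation, write $T_0$ for the initial time, to keep it distinct from the reaching time $T$ in \eqref{T_equation}.

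\textbf{Step 2 (explicit bound).} Variation of constants gives
\begin{equation*}
e(t)=e^{-\sigma(t-T_0)}e(T_0)+\int_{T_0}^{t}e^{-\sigma(t-\tau)}\ddot z(\tau)\,d\tau .
\end{equation*}
Write $\bar\epsilon=\sup_t|\ddot z(t)|\le\epsilon$. Then
\begin{equation*}
|e(t)|\le\frac{\bar\epsilon}{\sigma}+e^{-\sigma(t-T_0)}\Big(|e(T_0)|-\frac{\bar\epsilon}{\sigma}\Big).
\end{equation*}
Two facts follow directly. The set $\{|e|\le\epsilon/\sigma\}$ is forward invariant, since $\frac{d}{dt}|e|\le\epsilon-\sigma|e|\le0$ on its boundary. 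Consequently, if $|e(T_0)|\le\epsilon/\sigma$, the claim holds for every $t$, and any $T\ge T_0$ works.

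\textbf{Step 3 (finite-time entry; the main obstacle).} Suppose $|e(T_0)|>\epsilon/\sigma$. The bound must be reached in finite time, not only asymptotically. My first attempt is to use the slack between the true bound $\bar\epsilon$ and the design constant $\epsilon$. If $\bar\epsilon<\epsilon$, the bound in Step 2 falls below $\epsilon/\sigma$ once
\begin{equation*}
t>T:=T_0+\frac{1}{\sigma}\ln\frac{|e(T_0)|-\bar\epsilon/\sigma}{(\epsilon-\bar\epsilon)/\sigma},
\end{equation*}
and forward invariance then keeps $e$ inside the set for all $t>T$. This is the statement with an explicit $T$.

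The difficult case is when the bound on $\ddot z$ is tight ($\bar\epsilon=\epsilon$) and the initial error lies outside the set. Here the linear solution only converges exponentially. For example, with $\ddot z\equiv\epsilon$ the error equals $\epsilon/\sigma+e^{-\sigma t}(e(T_0)-\epsilon/\sigma)$, which stays above $\epsilon/\sigma$ for all $t$. So finite-time entry cannot hold in general. To close this case I would rely on the filter initialization: $\eta_f(T_0)$ is free and can be set to $-\sigma z(T_0)$, so that $y(T_0)=0$ and $|e(T_0)|\le\gamma$. Choosing $\sigma$ with $\gamma\le\epsilon/\sigma$ then places the initial error inside the invariant set, and Step 2 finishes the proof.

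I would therefore present the lemma under one of two qualifications: either the known $\epsilon$ strictly exceeds $\sup|\ddot z|$, which gives the explicit $T$ above, or the filter is initialized as just described. I expect this step to be the main obstacle, since without one of these qualifications only the asymptotic bound $\limsup_{t\to\infty}|e(t)|\le\epsilon/\sigma$ is available.
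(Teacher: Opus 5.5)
Your Steps 1--2 are the paper's proof. The paper stops at $|e(t)|\le e^{-\sigma t}|e(0)|+\epsilon/\sigma$ and takes a limit. You are right that this alone only gives $\limsup_{t\to\infty}|e(t)|\le\epsilon/\sigma$ and no finite $T$.

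The genuine gap is in Step~3, where you conclude that finite-time entry fails in the tight case $\sup|\ddot z|=\epsilon$, so the lemma needs an extra qualification. Your counterexample $\ddot z\equiv\epsilon$ on $[T_0,\infty)$ is not admissible. It gives $\dot z(t)=\dot z(T_0)+\epsilon(t-T_0)$, which contradicts $\sup_t|\dot z(t)|\le\gamma$. That hypothesis does more than keep $e(T_0)$ finite: it is exactly what forces finite-time entry, so the lemma holds as stated. Your initialization remedy also does not help. It needs $\sigma\le\epsilon/\gamma$, the opposite of the intended $\sigma\gg 0$, and in that regime the conclusion is already met by the trivial estimate $y\equiv 0$.

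The missing step is one integration of the error equation. Suppose $e(\tau)>\epsilon/\sigma$ for all $\tau\in[T_0,t_1]$. Integrate $\dot e=\ddot z-\sigma e$ over this interval and use $|\dot z|\le\gamma$:
\[
\frac{\epsilon}{\sigma}<e(t_1)=e(T_0)+\dot z(t_1)-\dot z(T_0)-\sigma\int_{T_0}^{t_1}e(\tau)\,d\tau\le e(T_0)+2\gamma-\epsilon\,(t_1-T_0).
\]
Hence $t_1-T_0<(|e(T_0)|-\epsilon/\sigma+2\gamma)/\epsilon$, and the case $e<-\epsilon/\sigma$ is symmetric. So $e$ reaches the boundary of $\{|e|\le\epsilon/\sigma\}$ at some time $t_h\le T:=T_0+(|e(T_0)|-\epsilon/\sigma+2\gamma)/\epsilon$.

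Now restart your Step~2 bound at $t_h$. This gives $|e(t)|\le e^{-\sigma(t-t_h)}\epsilon/\sigma+(\epsilon/\sigma)(1-e^{-\sigma(t-t_h)})=\epsilon/\sigma$ for all $t\ge t_h$. That proves the lemma exactly as stated, with an explicit $T$. The same argument also closes the corresponding gap in the paper's own proof, which as written only establishes the asymptotic bound.
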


\begin{proof}
See the Appendix.
\end{proof}

As a result, an estimate of the error $\tilde{\rho} = \rho - \hat{\rho}$
where $\hat{\rho}$ represents the disturbance estimate, can be derived from equation \eqref{z_dot} as follows: 
\begin{align}
    \tilde{\rho} = cy - up_v +\hat{\rho}.
\end{align}
Consequently, we consider the the control law:
\begin{align}\label{rho_hat_dot}
\dot{\hat{\rho}} &= \gamma \frac{ k_1 p}{c q}s
    \tilde{z}_2^{\frac{p}{q}-1} + (\epsilon + 1) \textsf{sgn} (\tilde{\rho}),\\\label{control_law_voltage}
u &= \frac{cq }{k_1 p p_v} ( \tilde{z}_2^{2 - \frac{p}{q}} + (\delta + \eta ) \textsf{sgn}(s)) + \hat{\rho} + u_{in},
\end{align} 
where the injected control $u_{in} = ( \tilde{z}_2^{\frac{p}{q}-1} - 1)(\delta + \eta )\textsf{sgn}(s)$ and $\gamma >0$ is the corresponding disturbance estimation adaptation gain and $k_1>0$, $\eta >0$ are control gains.
Substituting \eqref{control_law_voltage} in \eqref{open_loop_dynamics} yields the following closed-loop dynamics:
\begin{align}\label{closed_loop_error_dynamics}
\dot{\hat{\rho}}  & =\gamma \frac{ k_1 p}{c q}s
    \tilde{z}_2^{\frac{p}{q}-1} + (\epsilon + 1) \textsf{sgn} (\tilde{\rho}),\\ 
 \dot{s} & =   - (\delta + \eta ) \textsf{sgn}(s) \tilde{z}_2^{\frac{p}{q}-1} + \frac{ k_1 p}{c q}
    \tilde{z}_2^{\frac{p}{q}-1} (\rho -   \hat{\rho}) + u_{in}. \label{closed_loop_error_dynamics2}
    \end{align} 
The next results give the finite-time stability of the closed-loop dynamics above.
\begin{theorem}
Consider the closed-loop dynamics described in \eqref{closed_loop_error_dynamics} and \eqref{closed_loop_error_dynamics2}. If $\rho(t)$ is uniformly continuous, and there exists constants $\eta > 0, \epsilon > 0$ and $\gamma > 0$,
     then the sliding manifold defined by \eqref{sliding_surface} is reached within the finite time interval 
     $[0,\hspace{2mm} t_r]$,  and $s(t) \equiv 0$ for all $t> t_r$ where the finite reaching time $t_r$ is given by:
     \begin{align}
         t_r = \frac{{\max\left\{\gamma,1\right\}}}{\sqrt{2}\min\left\{\gamma\left(\delta + \eta\right), 1 \right\}  \min \{\sqrt{\gamma}, 1 \}}   \norm{
\begin{bmatrix}
           |s(0)|  \\
           |\tilde{\rho} (0)|
         \end{bmatrix}}_2 .
           \end{align}
\end{theorem}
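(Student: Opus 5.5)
The plan is a Lyapunov argument for the pair $(s,\tilde{\rho})$ using a weighted quadratic function
\begin{equation*}
V = \frac{1}{2}s^2 + \frac{1}{2\gamma}\tilde{\rho}^2 ,
\end{equation*}
which satisfies $\frac{\min\{1,\gamma^{-1}\}}{2}\norm{[\,|s|\;\; |\tilde{\rho}|\,]^\top}_2^2 \le V \le \frac{\max\{1,\gamma^{-1}\}}{2}\norm{[\,|s|\;\; |\tilde{\rho}|\,]^\top}_2^2$. The aim is a finite-time differential inequality of the form $\dot V \le -\kappa\sqrt{V}$ with an explicit $\kappa>0$. The comparison lemma then gives $\sqrt{V(t)}\le \sqrt{V(0)}-\frac{\kappa}{2}t$, so the reaching time is $t_r = 2\sqrt{V(0)}/\kappa$. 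Converting $\sqrt{V(0)}$ to the Euclidean norm of $(|s(0)|,|\tilde{\rho}(0)|)$ through the sandwich bounds should produce the stated combination of $\max\{\gamma,1\}$, $\min\{\sqrt{\gamma},1\}$ and the factor $\sqrt{2}$.

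The first step is to differentiate $V$ along \eqref{closed_loop_error_dynamics}--\eqref{closed_loop_error_dynamics2}, using $\dot{\tilde{\rho}} = \dot{\rho}-\dot{\hat{\rho}}$.
\begin{itemize}
\item The cross term $\frac{k_1p}{cq}s\tilde{z}_2^{\frac{p}{q}-1}\tilde{\rho}$ from $s\dot s$ cancels exactly against the adaptation term $-\frac{1}{\gamma}\tilde{\rho}\,\gamma\frac{k_1p}{cq}s\tilde{z}_2^{\frac{p}{q}-1}$. This is the reason for the choice of $\dot{\hat{\rho}}$ in \eqref{rho_hat_dot}.
\item Substituting $u_{in}$ collapses the switching terms in $\dot s$ to $-(\delta+\eta)\textsf{sgn}(s)$. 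Here $\tilde{z}_2^{\frac{p}{q}-1}\ge 0$, because $p-q$ is even, and I intend to use the injected term precisely so that the $\tilde{z}_2^{\frac{p}{q}-1}$ factor cancels. This gives $s\dot s \le -(\delta+\eta)|s|$.
\item In the $\tilde{\rho}$ channel, Assumption~3 gives $|\dot\rho|\le\epsilon$. The signum term then yields $\frac{1}{\gamma}\tilde{\rho}\big(\dot\rho-(\epsilon+1)\textsf{sgn}(\tilde{\rho})\big)\le -\frac{1}{\gamma}|\tilde{\rho}|$.
\end{itemize}
Uniform continuity of $\rho$ together with Lemma~1 justifies that $\tilde{\rho}$, computed from the filter output $y$, is an accurate proxy after the filter transient, up to an $\epsilon/\sigma$ error absorbed by the margin ``$+1$''.

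Combining these gives
\begin{equation*}
\dot V \le -(\delta+\eta)|s| - \frac{1}{\gamma}|\tilde{\rho}| \le -\frac{\min\{\gamma(\delta+\eta),1\}}{\gamma}\big(|s|+|\tilde{\rho}|\big).
\end{equation*}
Since $|s|+|\tilde{\rho}|\ge \norm{[\,|s|\;\; |\tilde{\rho}|\,]^\top}_2$ and $\norm{\cdot}_2 \ge \sqrt{2V/\max\{1,\gamma^{-1}\}}$, we obtain $\dot V\le -\kappa\sqrt{V}$ with $\kappa$ built from $\min\{\gamma(\delta+\eta),1\}$, $\gamma$ and $\max\{1,\gamma^{-1}\}$. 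Integrating and rewriting $\max\{1,\gamma^{-1}\}$ and $\min\{1,\gamma^{-1}\}$ in terms of $\max\{\gamma,1\}$ and $\min\{\sqrt\gamma,1\}$ should reproduce the stated $t_r$, or a bound no larger.

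Invariance of $s\equiv 0$ for $t>t_r$ follows because $V$ is nonincreasing and reaches zero. Once $V=0$, the inequality $s\dot s\le-\eta|s|$ keeps any deviation from leaving the manifold, interpreting trajectories in the Filippov sense.

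I expect two obstacles. The first is making the cancellation of $\tilde{z}_2^{\frac{p}{q}-1}$ through $u_{in}$ rigorous; near $\tilde{z}_2=0$ the reaching law degenerates, which is why $u_{in}$ is injected. The second is bookkeeping the constants so they match the claimed expression exactly. If the constants come out slightly differently, I would present the derived bound and show it is dominated by the stated $t_r$.
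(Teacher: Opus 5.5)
Your Lyapunov function, the two cancellations and the inequality $\dot V\le-(\delta+\eta)|s|-\frac{1}{\gamma}|\tilde\rho|$ are exactly the paper's. There is one slip: the cross term $\frac{k_1p}{cq}s\tilde z_2^{\frac{p}{q}-1}\tilde\rho$ cancels only in $\dot V$. So $s\dot s\le-(\delta+\eta)|s|$ does not hold on its own, and $s\dot s\le-\eta|s|$ is available only once $\tilde\rho=0$.

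The genuine gap is the last step, which you left open. Let $x=(|s|,|\tilde\rho|)^\top$. Your own bookkeeping gives $\kappa=\frac{\min\{\gamma(\delta+\eta),1\}}{\gamma}\sqrt{2\min\{\gamma,1\}}$ and $\sqrt{V(0)}\le\norm{x(0)}_2/\sqrt{2\min\{\gamma,1\}}$. Hence
\[
\frac{2\sqrt{V(0)}}{\kappa}\le\frac{\max\{\gamma,1\}}{\min\{\gamma(\delta+\eta),1\}}\norm{x(0)}_2=\sqrt{2}\min\{\sqrt{\gamma},1\}\,t_r .
\]
The factor $\min\{\sqrt\gamma,1\}$ cancels instead of surviving, so your fallback of showing the bound is dominated by $t_r$ works only when $\gamma\le\frac12$.

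The paper reaches the stated constant through an unjustified step. From $V\le\frac{1}{2\min\{\gamma,1\}}\norm{x}_1^2$ it writes $\dot V\le-2\min\{\delta+\eta,\frac{1}{\gamma}\}\min\{\gamma,1\}V^{1/2}$. That bound only yields the factor $\sqrt{2\min\{\gamma,1\}}$ in place of $2\min\{\gamma,1\}$. The two agree at $\gamma=\frac12$, and for $\gamma>\frac12$ the paper's version is not implied.

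No sharper bookkeeping can close this, because the differential inequality is attained. Take $\tilde\rho\equiv0$ and $\dot s=-(\delta+\eta)\textsf{sgn}(s)$. This gives $\dot V=-(\delta+\eta)|s|-\frac{1}{\gamma}|\tilde\rho|$ with equality. It also solves \eqref{closed_loop_error_dynamics2} whenever the $\tilde\rho$-channel can slide on $\tilde\rho=0$. Yet $s$ reaches zero only at $|s(0)|/(\delta+\eta)$. For $\gamma=\delta+\eta=1$ that is $|s(0)|$, whereas the claimed $t_r$ is $|s(0)|/\sqrt{2}$.

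What this argument actually proves is one of two weaker statements:
\begin{itemize}
\item the theorem with $t_r$ replaced by $\frac{\max\{\gamma,1\}}{\min\{\gamma(\delta+\eta),1\}}\norm{x(0)}_2$; or
\item the stated $t_r$ under the added restriction $\gamma\le\frac12$ (the paper's simulations use $\gamma=0.5$).
\end{itemize}

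Finally, drop the appeal to Lemma~1 and uniform continuity. The paper's proof, like the closed loop it analyses, treats $\textsf{sgn}(\tilde\rho)$ as using the true error. If it used the filtered estimate instead, an $O(1/\sigma)$ error could flip the sign near $\tilde\rho=0$. The ``$+1$'' margin bounds a magnitude and cannot absorb a wrong sign, so you would get only ultimate boundedness, not exact finite-time convergence.
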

\begin{remark}
Moroever, the disturbance estimate error, $\tilde{\rho}(t)$ converges to zero in finite time.
\end{remark}

\begin{proof}
    Consider the following Lyapunov candidate:
    \begin{align}\label{LCF}
    V = \frac{1}{2}s^2 + \frac{1}{2\gamma}\tilde\rho^2,
    \end{align}
    where $\tilde\rho = \rho-\hat{\rho}$.
    
Taking the first time-derivative of (\ref{LCF}) and substituting (\ref{closed_loop_error_dynamics2}) yields:
    \begin{alignat}{1}
\dot{V} & =  s\dot{s} +\frac{1}{\gamma} \tilde{\rho} \dot{\tilde{\rho}}, \nonumber \\
        & =    - (\delta + \eta ) \lvert s \rvert \tilde{z}_2^{\frac{p}{q}-1} + \frac{ k_1 p}{c q}s
    \tilde{z}_2^{\frac{p}{q}-1} \tilde{\rho} +\frac{1}{\gamma} \tilde{\rho} \dot{\tilde{\rho}}  \\
    & + ( \tilde{z}_2^{\frac{p}{q}-1} - 1)(\delta + \eta )\lvert s \rvert, \nonumber \\
   & 
  = - (\delta + \eta ) |s| + \frac{ k_1 p}{c q}s
    \tilde{z}_2^{\frac{p}{q}-1} \tilde{\rho} + \frac{1}{\gamma} \tilde{\rho} \dot{\rho} -  \frac{1}{\gamma} \tilde{\rho} \dot{\hat{\rho}}.\nonumber  
\end{alignat} 
Consequently, from Assumption \ref{disturbance_assumption}, we have:
  \begin{alignat}{1}
\dot{V} & \leq  - (\delta + \eta ) |s| + \frac{ k_1 p}{c q}s
    \tilde{z}_2^{\frac{p}{q}-1} \tilde{\rho} + \frac{\epsilon}{\gamma} |\tilde{\rho}|  -  \frac{1}{\gamma} \tilde{\rho} \dot{\hat{\rho}}.\nonumber  
\end{alignat}
Then applying the adaptation law in (\ref{rho_hat_dot}) yields: 
\begin{align}
    \dot{V} & \leq  -(\delta + \eta ) |s| - \frac{1}{\gamma}|\tilde{\rho}|, \nonumber \\
    & \leq -\min \left\{\delta + \eta, \frac{1}{\gamma}   \right\}
   \norm{
\begin{bmatrix}
           |s|  \\
           |\tilde{\rho}|
         \end{bmatrix}}_1 . \nonumber \label{V_dot_bound}
\end{align}
Given that $\dot{V} < 0$ and $V > 0$, $V$ is bounded, which implies $V \in \mathcal{L}_\infty$. By implication, $s \in \mathcal{L}_\infty$, $\tilde{\rho} \in \mathcal{L}_\infty$. Thus, $s$ and $\tilde{\rho}$ are bounded.  

From (\ref{LCF}), it is clear that: 
\begin{align*}
V&\le\frac{1}{2\min\left\{\gamma,1\right\}}\norm{\begin{bmatrix}|s| \\|\tilde{\rho}|\end{bmatrix}}_2^2
 \le\frac{1}{2\min\left\{\gamma,1\right\}}\norm{\begin{bmatrix}|s| \\|\tilde{\rho}|\end{bmatrix}}_1^2.
\end{align*}

Thus,
\begin{align}\nonumber
    \dot{V} &\leq -2\min\left\{\delta + \eta, \frac{1}{\gamma}\right\}\min\left\{\gamma,1\right\}V^\frac{1}{2},\\
            & \leq -2\frac{\min\left\{\gamma\left(\delta + \eta\right), 1 \right\}}{\max\left\{\gamma,1\right\}}V^\frac{1}{2}. \nonumber
\end{align}
Now let $\psi(t)\in\mathbb{R}_+$ be the solution of the system:$
     \frac{d}{dt}\psi(t)  = -k \psi(t)^{\frac{1}{2}},
$
where $k = 2\frac{\min\left\{\gamma\left(\delta + \eta\right), 1 \right\}} {\max\left\{\gamma,1\right\}}$. It follows, using the comparison lemma \cite{khalil2002nonlinear}, $
    V(t) \leq \psi(t) = \psi(0) - \frac{k}{2}t$. Consequently, if $|V(0)| \neq 0$, the inequality implies:
\begin{equation} \label{Vfinite_time}
    V(t) \equiv 0, \hspace{2mm} \forall \hspace{2mm} t > \frac{{\max\left\{\gamma,1\right\}}}{\min\left\{\gamma\left(\delta + \eta\right), 1 \right\}}|V^{\frac{1}{2}}(0)|,
\end{equation}
Furthermore, if $s(0) \neq 0$, $\tilde{\rho}(0) \neq 0$, $V(t) \equiv 0$ for all:

\begin{align}
   \hspace{1mm} t > \frac{{\max\left\{\gamma,1\right\}}}{\sqrt{2}\min\left\{\gamma\left(\delta + \eta\right), 1 \right\}  \min \{\sqrt{\gamma}, 1 \}}   \norm{
\begin{bmatrix}
           |s(0)|  \\
           |\tilde{\rho} (0)|
         \end{bmatrix}}_2 .
\end{align}
Therefore, $V$ converges to the origin in a finite time, as specified by (\ref{Vfinite_time}). As a result, the system states converge to the origin within a finite time as $s(t)$ and $\tilde{\rho}(t)$ approaches zero. 
\end{proof}
\subsection{Current Regulation}\label{CurrentRegulation}
In this section the regulation of the converter grid currents to the references $i_d^* = \frac{u}{v_d}$, $i_q^*=0$  is carried out where $u$ is the voltage control law described in section \ref{voltage_regulator}. From (\ref{power_converter}), the current dynamics of the power converter is given by:
\begin{equation} \label{power_conv}
l\dot{\mathbf{i}} = (-r  + w_g l J) \mathbf{i} + \mathbf{v} - \mathbf{u} v_{dc} + \boldsymbol{\rho},
\end{equation}
where the external grid disturbance is represented by $\boldsymbol{\rho} = [\rho_d \hspace{5pt}\rho_q]^\top$, and it is constrained such that $||\boldsymbol{\rho}|| \leq \delta$, with $\delta > 0$.

For a given reference trajectory $\mathbf{i}_0 = [i_d^* \ i_q^*]^\top$, the error dynamics of the system in \eqref{power_conv} can be derived by taking the time derivative of the error signal $\mathbf{\tilde{i}}(t) = \mathbf{i}(t)-\mathbf{i}_0(t)$ and substituting (\ref{power_conv}). This results in the following expression:
\begin{equation}\label{error_dynamics2}
 l \mathbf{\dot{\tilde{i}}} = -r(\mathbf{\tilde{i}} + \mathbf{i}_0) + w_g l J (\mathbf{\tilde{i}} + \mathbf{i}_0) + \mathbf{v} - \textbf{u} v_{dc} + \boldsymbol{\rho} - l \mathbf{\dot{i}_0}. 
\end{equation}
Let the known term, $\boldsymbol{\psi} = -r \mathbf{i}_0 +  w_g l J \mathbf{i}_0 + \mathbf{v} -  l \mathbf{\dot{i}_0} $
so that the error dynamics in \eqref{error_dynamics2} can be re-expressed as:
\begin{equation} \label{error_dynamics}
l \mathbf{\dot{\tilde{i}}} = -r \mathbf{\tilde{i}} +   w_g l J \mathbf{i} - \mathbf{u} v_{dc} + \boldsymbol{\rho} + \boldsymbol{\psi}.
\end{equation}
For the system described by (\ref{error_dynamics}), we consider a sliding manifold defined by the nonlinear equation:
\begin{equation}\label{sm_current}
    \mathbf{s}(i_d,i_q) = \mathbf{\tilde{i}} + \beta \int_0^t \mathbf{\tilde{i}}^ \frac{q}{p} (\tau) d \tau = \mathbf{0},
\end{equation} 
where $\beta > 0$, $p$ and $q$ are positive odd integers with $p > q$ and $\frac{p}{q} \in (1, 2)$. On the sliding manifold ($\mathbf{s} = \mathbf{0}$), the converter current states are described by the dynamics:
\begin{equation} \label{i_tilde_dot}
\mathbf{\dot{\tilde{i}}} = -\beta  \mathbf{\tilde{i}}^ \frac{q}{p}.
\end{equation}
Assuming a sufficient reaching law is applied to (\ref{error_dynamics}) then on the sliding manifold, $\mathbf{\tilde{i}}$ will be drawn to the system's terminal attractor (the origin). That is, as  $\mathbf{\tilde{i}}$ goes to zero by implication of \eqref{i_tilde_dot}, the converter states, $i_d$ and $i_q$ are forced to go to zero by the finite time given by:
\begin{equation}\label{finite_time}
    t_f = -\frac{1}{\beta} \int_{\mathbf{\tilde{i}}(0)}^0 \frac{\text{d} \mathbf{\tilde{i}}}{|\mathbf{\tilde{i}}|^ \frac{q}{p}} = \frac{p}{\beta(p - q)} \mathbf{\tilde{i}}(0)^{1-\frac{q}{p}}, \hspace{0.15in}\mathbf{\tilde{i}}(0) \neq 0.
\end{equation}
Taking the first-time derivative of the sliding surface along the system trajectories in (\ref{sm_current}) yields:
\begin{equation}\label{s_dot}
    \dot{\mathbf{s}} = \mathbf{\dot{\tilde{i}}}  + \beta \mathbf{\tilde{i}}^ \frac{q}{p}.
\end{equation}
Substituting (\ref{error_dynamics}) into (\ref{s_dot}) gives the open loop dynamics as:
\begin{equation} \label{open_loop}
    l\dot{\mathbf{s}} = (-r  +   w_g l J )\mathbf{\tilde{i}} - \mathbf{u} v_{dc} + \boldsymbol{\rho} + \boldsymbol{\psi} + l  \beta \mathbf{\tilde{i}}^ \frac{q}{p}.
\end{equation}
Consequently, the following robust control law is proposed to drive the rectifier states to the sliding surface and guarantee they stay there:
\begin{align} \label{control_law}
   \mathbf{u} = \frac{1}{v_{dc}} \Big((-r  +   w_g l J )\mathbf{\tilde{i}} + \boldsymbol{\psi} + l \beta  \mathbf{\tilde{i}}^ \frac{q}{p}  
   +  (\delta + \eta ) 
    \textsf{sat} (\frac{\mathbf{s}}{\epsilon}) \Big),
\end{align}
where $\eta > 0$, $\epsilon > 0$ are control design parameters. Substituting the control law from \eqref{control_law} into \eqref{open_loop} yields the following closed-loop dynamics:
\begin{equation} \label{closed_loop}
    l \dot{\mathbf{s}} = -(\delta + \eta)  \textsf{sat}(\frac{\mathbf{s}}{\epsilon}) +  \boldsymbol{\rho}. \end{equation}
 
\begin{lemma}\label{lem:sat}
\textbf{2:} For any $\epsilon > 0$ and $\mathbf{s} \in \mathbb{R}^n$ with $\|\mathbf{s}\| \le \epsilon \sqrt{n}$,  
\[
\mathbf{s}^\top \textsf{sat}\Big(\frac{\mathbf{s}}{\epsilon}\Big) \ge \sqrt{n}\,\|\mathbf{s}\|.
\]
\end{lemma}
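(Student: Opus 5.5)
The plan is to argue componentwise, since $\textsf{sat}$ acts coordinate by coordinate:
\[
\mathbf{s}^\top \textsf{sat}\Big(\frac{\mathbf{s}}{\epsilon}\Big)=\sum_{i=1}^n s_i\,\textsf{sat}\Big(\frac{s_i}{\epsilon}\Big).
\]
Each summand is nonnegative. There are two cases:
\begin{itemize}
\item if $|s_i|\le\epsilon$, the summand equals $s_i^2/\epsilon$;
\item if $|s_i|>\epsilon$, it equals $|s_i|$.
\end{itemize}
So every term equals $\min\{s_i^2/\epsilon,\,|s_i|\}$. I would first bound each term from below by a common quadratic expression in $s_i$, and then sum over $i$ to recover a bound in terms of $\|\mathbf{s}\|$.

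Before doing this, I would test the inequality exactly as stated, and it fails near the origin. Take $n=1$ and $s=\epsilon/2$, which satisfies $\|s\|\le\epsilon\sqrt{n}$. Then
\[
s\,\textsf{sat}(s/\epsilon)=\frac{\epsilon}{4}<\frac{\epsilon}{2}=\sqrt{n}\,|s|.
\]
More generally, the left-hand side is quadratic in $\|\mathbf{s}\|$ inside the boundary layer, while the right-hand side is linear. So no bound of the form $c\|\mathbf{s}\|$ with $c>0$ can hold for all small $\mathbf{s}$. The key step, and the main obstacle, is therefore to identify the correct form of the lemma that the subsequent boundary-layer analysis of \eqref{closed_loop} actually needs. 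I would not try to prove the literal statement.

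The corrected statement I would aim for is
\[
\mathbf{s}^\top \textsf{sat}\Big(\frac{\mathbf{s}}{\epsilon}\Big)\ \ge\ \frac{\|\mathbf{s}\|^2}{\epsilon\sqrt{n}}\qquad\text{for all } \|\mathbf{s}\|\le\epsilon\sqrt{n}.
\]
The proof follows the two cases above.
\begin{itemize}
\item If $|s_i|\le\epsilon$, then $s_i^2/\epsilon\ge s_i^2/(\epsilon\sqrt{n})$ trivially.
\item If $|s_i|>\epsilon$, then $|s_i|\le\|\mathbf{s}\|\le\epsilon\sqrt{n}$, so $|s_i|\ge s_i^2/(\epsilon\sqrt{n})$.
\end{itemize}
This second case is exactly where the hypothesis $\|\mathbf{s}\|\le\epsilon\sqrt{n}$ is used. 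Summing over $i$ gives the bound. Equivalently, the bound reads $\mathbf{s}^\top\textsf{sat}(\mathbf{s}/\epsilon)\ge \frac{\|\mathbf{s}\|}{\epsilon\sqrt{n}}\,\|\mathbf{s}\|$, a linear-type bound whose coefficient degrades as $\mathbf{s}\to\mathbf{0}$.

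For the complementary region, I would also record a bound outside the layer. If every $|s_i|\ge\epsilon$, then
\[
\mathbf{s}^\top\textsf{sat}(\mathbf{s}/\epsilon)=\|\mathbf{s}\|_1\ge\|\mathbf{s}\|_2,
\]
which gives the reaching-phase estimate. With these two bounds, a Lyapunov function $\tfrac{l}{2}\|\mathbf{s}\|^2$ for \eqref{closed_loop} yields finite-time reaching of the boundary layer, followed by ultimate boundedness of $\mathbf{s}$ at a level of order $\epsilon$. This is the honest conclusion available with a saturation, rather than exact finite-time convergence of $\mathbf{s}$ to $\mathbf{0}$.
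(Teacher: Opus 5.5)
Your refutation is correct, and you are right not to try to prove the literal statement. With $n=1$ and $s=\epsilon/2$ the left side is $\epsilon/4<\epsilon/2$. The failure is not an edge case. Since $s_i\,\textsf{sat}(s_i/\epsilon)\le|s_i|$, one always has $\mathbf{s}^\top\textsf{sat}(\mathbf{s}/\epsilon)\le\|\mathbf{s}\|_1\le\sqrt{n}\,\|\mathbf{s}\|_2$. The claim therefore forces equality in both steps, which on the ball means $\mathbf{s}=\mathbf{0}$ or $|s_i|=\epsilon$ for every $i$.

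The paper's appendix proof starts as you do, writing each term as $\min\{s_i^2/\epsilon,|s_i|\}$. It then chains three inequalities that point the wrong way:
\begin{itemize}
\item $\sum_i\min\{s_i^2/\epsilon,|s_i|\}\ge\min\{\|\mathbf{s}\|_2^2/\epsilon,\|\mathbf{s}\|_1\}$. A sum of minima is at most the minimum of the sums, strictly so under mixed saturation: for $n=2$ and $\mathbf{s}=(1.4\epsilon,0.1\epsilon)$, the left side is $1.41\epsilon$ and the right side is $1.5\epsilon$.
\item $\|\mathbf{s}\|_1\ge\sqrt{n}\|\mathbf{s}\|_2$, whereas Cauchy--Schwarz gives the reverse.
\item $\min\{\|\mathbf{s}\|_2^2/\epsilon,\sqrt{n}\|\mathbf{s}\|_2\}\ge\sqrt{n}\|\mathbf{s}\|_2$, which holds only when $\|\mathbf{s}\|_2\ge\epsilon\sqrt{n}$, the opposite of the hypothesis.
\end{itemize}

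Your replacement bound $\mathbf{s}^\top\textsf{sat}(\mathbf{s}/\epsilon)\ge\|\mathbf{s}\|^2/(\epsilon\sqrt{n})$ on the ball is correctly proved and sharp, with equality at $\mathbf{s}=\epsilon\sqrt{n}\,\mathbf{e}_1$ ($\mathbf{e}_1$ the first unit vector). Your diagnosis of Theorem 2 is also right. Any constant $\boldsymbol{\rho}\neq\mathbf{0}$ with $\|\boldsymbol{\rho}\|\le\delta$ makes $\mathbf{s}=\epsilon\boldsymbol{\rho}/(\delta+\eta)\neq\mathbf{0}$ an equilibrium of \eqref{closed_loop}. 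Hence $\mathbf{s}\equiv\mathbf{0}$ cannot be reached in finite time.

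Your downstream sketch needs one small fix. The set where every $|s_i|\ge\epsilon$ does not cover the complement of the ball; take $\mathbf{s}=(10\epsilon,0)$. What holds, and suffices, is $\mathbf{s}^\top\textsf{sat}(\mathbf{s}/\epsilon)\ge\|\mathbf{s}\|_2$ whenever $\|\mathbf{s}\|_\infty>\epsilon$. This condition is automatic off the ball, since $\|\mathbf{s}\|_\infty\ge\|\mathbf{s}\|_2/\sqrt{n}$. To see it, let $a>\epsilon$ be the Euclidean norm of the saturated components and $B$ the sum of squares of the others. The left side is then at least $a+B/\epsilon\ge a+B/(2a)\ge\sqrt{a^2+B}$. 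With this, $\dot V\le-\eta\|\mathbf{s}\|$ off the ball, so the ball is reached within time $l(\|\mathbf{s}(0)\|-\epsilon\sqrt{n})/\eta$. Your inner bound then gives $\limsup_{t\to\infty}\|\mathbf{s}(t)\|\le\epsilon\sqrt{n}\,\delta/(\delta+\eta)$.
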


\begin{proof}
See the Appendix.
\end{proof}

\begin{theorem}
    Consider the closed-loop error dynamics described in \eqref{closed_loop}. If $\left\|\mathbf{s}(0)\right\|\le \epsilon\sqrt{n}$ for some $\epsilon >0$ and $n\ge 1$,
     then $\mathbf{s}(t)\equiv \mathbf{0}$ is reached within the finite-time interval $[0,\hspace{2mm}l\frac{\epsilon}{\eta}]$, and $\mathbf{s}(t) \equiv \mathbf{0}$ for all $t> l\frac{\epsilon}{\eta}$. 
\end{theorem}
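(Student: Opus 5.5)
The plan is a quadratic Lyapunov argument on the closed-loop dynamics \eqref{closed_loop}. Lemma~2 converts the saturation term into a norm-type dissipation, and a comparison argument on $\|\mathbf{s}\|$ then gives the reaching time.

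\textbf{Step 1 (Lyapunov function).} Take $V = \frac{l}{2}\mathbf{s}^\top\mathbf{s}$. Along \eqref{closed_loop},
\begin{equation*}
\dot V = -(\delta+\eta)\,\mathbf{s}^\top \textsf{sat}\Big(\frac{\mathbf{s}}{\epsilon}\Big) + \mathbf{s}^\top\boldsymbol{\rho}.
\end{equation*}
By Cauchy--Schwarz and $\|\boldsymbol{\rho}\|\le\delta$, the disturbance term satisfies $\mathbf{s}^\top\boldsymbol{\rho}\le \delta\|\mathbf{s}\|$. Whenever $\|\mathbf{s}\|\le \epsilon\sqrt{n}$, Lemma~2 gives $\mathbf{s}^\top\textsf{sat}(\mathbf{s}/\epsilon)\ge \sqrt{n}\|\mathbf{s}\|\ge\|\mathbf{s}\|$, using $n\ge 1$. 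Combining the two,
\begin{equation*}
\dot V \le -(\delta+\eta)\|\mathbf{s}\| + \delta\|\mathbf{s}\| = -\eta\|\mathbf{s}\|.
\end{equation*}

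\textbf{Step 2 (forward invariance of the ball).} The estimate of Step 1 only holds on the ball $\{\|\mathbf{s}\|\le\epsilon\sqrt{n}\}$. Since $\dot V\le 0$ there, $\|\mathbf{s}\|$ is nonincreasing on the ball. A standard contradiction argument then shows that a trajectory starting inside cannot leave: take the first exit time $t^\ast$, note that $V$ is nonincreasing on $[0,t^\ast]$, and obtain a contradiction with exit. Hence the hypothesis $\|\mathbf{s}(0)\|\le\epsilon\sqrt{n}$ keeps the bound valid for all $t\ge0$.

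\textbf{Step 3 (finite-time reaching).} Writing $\|\mathbf{s}\| = \sqrt{2V/l}$, the bound becomes $\dot V\le -\eta\sqrt{2/l}\,V^{1/2}$, or equivalently $\frac{d}{dt}\|\mathbf{s}\|\le -\eta/l$ wherever $\mathbf{s}\neq\mathbf{0}$. By the comparison lemma, as in the proof of Theorem~1,
\begin{equation*}
\|\mathbf{s}(t)\|\le \|\mathbf{s}(0)\| - \frac{\eta}{l}\,t .
\end{equation*}
So $\mathbf{s}$ reaches $\mathbf{0}$ no later than $t_r = l\|\mathbf{s}(0)\|/\eta$.

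This bound does not literally give the stated interval $[0,\,l\epsilon/\eta]$, because $\|\mathbf{s}(0)\|$ may be as large as $\epsilon\sqrt{n}$, giving only $t_r\le l\epsilon\sqrt{n}/\eta$. I would recover the stated bound by exploiting the full strength of Lemma~2, i.e.\ keeping the factor $\sqrt{n}$ rather than discarding it. The dissipation is then at least $(\delta+\eta)\sqrt{n}\|\mathbf{s}\| - \delta\|\mathbf{s}\|\ge \eta\sqrt{n}\|\mathbf{s}\|$, so $\frac{d}{dt}\|\mathbf{s}\|\le -\eta\sqrt{n}/l$. This gives $t_r\le l\|\mathbf{s}(0)\|/(\eta\sqrt{n})\le l\epsilon/\eta$, as claimed.

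\textbf{Step 4 (staying on the surface).} Once $\mathbf{s}=\mathbf{0}$, the same inequality $\dot V\le 0$ together with $V\ge 0$ forces $V\equiv0$ thereafter. Hence $\mathbf{s}(t)\equiv\mathbf{0}$ for all $t>l\epsilon/\eta$.

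\textbf{Main obstacle.} The delicate points are the invariance of the ball in Step 2, which is what legitimizes Lemma~2 for all time, and the nonsmoothness of $\|\mathbf{s}\|$ at the origin. I would handle the latter by working with $V^{1/2}$ and the comparison lemma, rather than differentiating the norm directly.
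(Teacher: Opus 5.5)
Your argument is essentially the paper's proof. It uses the same $V=\frac{l}{2}\mathbf{s}^\top\mathbf{s}$, and applies Lemma~2 with the factor $\sqrt{n}$ kept; your refined Step~3 is exactly the paper's bound $\dot V\le-\eta\sqrt{n}\,\|\mathbf{s}\|$. It then uses the comparison lemma to get $t_r\le l\|\mathbf{s}(0)\|/(\eta\sqrt{n})\le l\epsilon/\eta$. Your invariance step is an addition the paper does not make.

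The trouble is that this shared route rests on Lemma~2, and Lemma~2 is false. If $0<\|\mathbf{s}\|<\epsilon$ (a subset of the hypothesis set), every component has $|s_i|<\epsilon$. Then $\textsf{sat}(\mathbf{s}/\epsilon)=\mathbf{s}/\epsilon$, so $\mathbf{s}^\top\textsf{sat}(\mathbf{s}/\epsilon)=\|\mathbf{s}\|^2/\epsilon<\|\mathbf{s}\|\le\sqrt{n}\,\|\mathbf{s}\|$. Already $n=1$, $\epsilon=1$, $s=\tfrac12$ gives $\tfrac14<\tfrac12$. In the appendix proof, the steps $\|\mathbf{s}\|_1\ge\sqrt{n}\|\mathbf{s}\|$ and $\|\mathbf{s}\|^2/\epsilon\ge\sqrt{n}\|\mathbf{s}\|$ both go the wrong way on the set $\|\mathbf{s}\|\le\epsilon\sqrt{n}$. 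Near the origin the dissipation is therefore only quadratic: $\dot V=-\frac{\delta+\eta}{\epsilon}\|\mathbf{s}\|^2+\mathbf{s}^\top\boldsymbol{\rho}$. This is positive when $\mathbf{s}$ is small and aligned with a nonzero $\boldsymbol{\rho}$, so neither version of your Step~3, nor your Step~4, survives.

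This cannot be patched, because the statement itself is false. Let $\|\mathbf{s}\|_\infty=\max_i|s_i|$. The box $\{\|\mathbf{s}\|_\infty\le\epsilon\}$ is forward invariant: on the face $s_i=\epsilon$ one has $l\dot s_i=-(\delta+\eta)+\rho_i\le-\eta$, and symmetrically on $s_i=-\epsilon$. Inside the box, \eqref{closed_loop} is the linear system $l\dot{\mathbf{s}}=-\frac{\delta+\eta}{\epsilon}\mathbf{s}+\boldsymbol{\rho}$.
\begin{itemize}
\item For a constant $\boldsymbol{\rho}\neq\mathbf{0}$, its solutions converge to $\epsilon\boldsymbol{\rho}/(\delta+\eta)\neq\mathbf{0}$.
\item For $\boldsymbol{\rho}\equiv\mathbf{0}$, the right-hand side is globally Lipschitz with equilibrium $\mathbf{0}$. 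By uniqueness, no solution with $\mathbf{s}(0)\neq\mathbf{0}$ ever reaches $\mathbf{0}$; explicitly, $\mathbf{s}(t)=e^{-(\delta+\eta)t/(l\epsilon)}\mathbf{s}(0)$ for $\mathbf{s}(0)$ in the box.
\end{itemize}

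What your method does prove uses the correct inequality $\mathbf{s}^\top\textsf{sat}(\mathbf{s}/\epsilon)\ge\|\mathbf{s}\|^2/\|\mathbf{s}\|_\infty\ge\|\mathbf{s}\|$, valid whenever $\|\mathbf{s}\|_\infty\ge\epsilon$. With it you get finite-time reaching of the boundary layer, within $l\|\mathbf{s}(0)\|/\eta$. After that comes exponential convergence into the ball $\|\mathbf{s}\|\le\epsilon\delta/(\delta+\eta)$, i.e.\ ultimate boundedness rather than exact sliding.

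Exact finite-time sliding needs the discontinuous term $\textsf{sgn}(\mathbf{s})$ in place of $\textsf{sat}(\mathbf{s}/\epsilon)$. Then $\mathbf{s}^\top\textsf{sgn}(\mathbf{s})=\|\mathbf{s}\|_1\ge\|\mathbf{s}\|$, and the first version of your Step~3 goes through (in the Filippov sense), giving $t_r\le l\|\mathbf{s}(0)\|/\eta$.
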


\begin{proof}
    Consider the following energy function:
    \begin{equation}\label{V}
       V = \frac{l}{2} \mathbf{s}^\top \mathbf{s},
   \end{equation}
   then taking the first-time derivative of V and substituting  (\ref{closed_loop}) yields:
\begin{align}
   l \mathbf{s}^\top \dot{\mathbf{s}}  = -(\delta + \eta) \epsilon \frac{\mathbf{s}^\top}{\epsilon}\textsf{sat}\left(\frac{\mathbf{s}}{{\epsilon}}\right) + \mathbf{s}^\top \boldsymbol{\rho}.
  \end{align}
Using Lemma 2, it follows that for all $\norm{\mathbf{s}} \leq \epsilon \sqrt{n}$ 
\begin{align*}
l \mathbf{s}^\top \dot{\mathbf{s}}  & \leq -(\delta + \eta)\sqrt{n} \norm{\mathbf{s}} + \mathbf{s}^\top \boldsymbol{\rho}, \\
& \leq -((\delta + \eta)\sqrt{n}  - \delta) \norm{\mathbf{s}}. 
\end{align*}
Since $n\ge 1$,
\begin{equation}\label{V_dot}
   \dot{V}  \leq -\eta\sqrt{n} \norm{\mathbf{s}}. 
   \end{equation}
Thus, the choice of control law satisfies the reachability condition in (\ref{V_dot}). Consequently the reachability time is considered by first expressing (\ref{V_dot}) in the the
form:
\begin{equation}
    \frac{l}{2}\frac{d \norm{\mathbf{s}}^2}{dt} \leq -\eta \sqrt{n} \norm{\mathbf{s}}.
\end{equation}
Now let $\phi(t)\in\mathbb{R}_+$ be the solution of the following differential equation:
\begin{equation}
    \frac{l}{2} \frac{d}{dt}\phi^2(t)  = -\eta \sqrt{n} \phi(t),
\end{equation}
The comparison lemma \cite{khalil2002nonlinear}, guarantees $\norm{\mathbf{s}} \leq \phi(t) = \phi(0) - \frac{\eta \sqrt{n}}{l}t$.
It is obvious then, if $\norm{\mathbf{s}(0)} \neq 0$, the system trajectories will reach the sliding surface within the finite time interval  $\big[0, \frac{l \norm{\mathbf{s}(0)}}{k} \big]$ and remain on $\mathbf{s} = \mathbf{0}$ for all  $t > \frac{l \norm{\mathbf{s}(0)}}{k}$.
This completes the proof.
\end{proof}

\section{Numerical and Experimental Results}
This section describes the performance of the proposed rectifier control laws on simulation, and on a physcial rectifier circuit.  To test how well the controllers perform, comparisons with recent algorithms developed for rectifier regulation, such as Proportional-Integral and Cascaded Proportional-Resonant Control Method, \cite{guler2023simplified}, adaptive STA, \cite{luo2019adaptive} and finite time integral terminal SMC \cite{eskandari2020finite} are presented.  

\subsection{Numerical Simulation Results}
To validate the performance of the proposed control design against the benchmarks earlier mentioned, simulations involving voltage and current regulations are carried out with the rectifier parameters shown in Table \ref{Table1} and control parameters given in Table \ref{table:table3}.    
\begin{table}[h]
\begin{center}
\caption{Rectifier Model Parameters.}
\label{Table1}
\begin{tabular}{ll}
\hline \hline
Parameter & Value \\
\hline
Line to line voltage, $\mathbf{v}$ & $\SI{400}{\volt}_{rms}$ (L-L)\\
Approx.\ DC-link, $v_{dc,avg}$ & $\approx$ 540 V \\
Grid resistance, $r$ & 0.02 $\Omega $ \\
Capacitance, $c$ & $3300 \hspace{2pt} \mu \text{F}$\\
Inductance, $l$  & 0.5$ \hspace{2pt} \text{mH}$ \\
Grid frequency, $f$ & 60 Hz \\
Reference voltage, $v^*_{dc} $ & 520V \\
\hline \hline
\end{tabular}
\end{center}
\end{table}

\begin{table}[h]
    \centering
    \caption{Control parameters for: Method 1 – Proportional-Resonant Control; Method 2 – Adaptive STA; Method 3 – Finite-Time Integral Terminal SMC; and the proposed method.}
    \label{table:table3}

    \begin{tblr}{
      colspec={Q[2cm, c] Q[2cm, c] Q[2cm, c] Q[2.8cm, c] Q[2.8cm, c]},
      hlines={1pt},
      row{1} = {font=\bfseries, valign=m},
      cells={valign=m}
    }
    \hline
    Loop &
    \makecell{Method 1\\$(k_p, k_i)$} &
    \makecell{Method 2\\$(\alpha_1, \alpha_2)$} &
    \makecell{Method 3\\$(\zeta, \mu, \sigma,$$p, q, p_1, q_1)$} &
    \makecell{Proposed Method\\$(p, q, \gamma, k_1,$$\delta, \eta, \beta, \epsilon)$} \\
    \hline
    Current &
    \makecell{$(0.01, 1)$} &
    \makecell{$(30, 2)$} &
    \makecell{$(1, 1, 0.5,$\\$5, 3, 1, 2)$} &
    \makecell{$(5, 3, -,$\\$-, 2, 1, 0.5, 1)$} \\
    Voltage &
    \makecell{$(0.01, 1)$} &
    \makecell{$(20, 4)$} &
    \makecell{$(1, 1, 0.5,$\\$5, 3, - , -)$} &
    \makecell{$(5, 3, 0.5,$\\$1, 2, 1, - , 1)$} \\
    \hline
    \end{tblr}
\end{table}

The response of the proposed voltage controller on the output DC voltage of the rectifier under a sinusoidal load disturbance is shown in Fig. \ref{Voltage_reg}. Figs. \ref{Voltage_reg}(a)–(d) present the responses of the proposed method and the rectifier control methods from \cite{guler2023simplified}, \cite{luo2019adaptive}, and \cite{eskandari2020finite}. A fast convergence of the proposed adaptive-based controller is observed, achieving the desired set point within $t_c = 0.8$ ms. This represents an improvement in convergence time, $t_c$, of $93.50\%$, $99.40\%$, and $74.19\%$ compared to the methods in \cite{guler2023simplified}, \cite{luo2019adaptive}, and \cite{eskandari2020finite}, respectively. The control gain, $\eta$, can be increased during simulations for even faster convergence. Fig. \ref{Voltage_reg}(e) and Fig. \ref{Voltage_reg}(f) show the load disturbance online estimation performance using an extended state observer (ESO) \cite{feng2002non} and the proposed disturbance adaptation law, respectively. As observed, our method accurately estimates the load power with smaller maximum and minimum overshoots. By adjusting the update parameter ($\gamma > 0$), the initial transient spike can be further reduced. The comparison of methods is considered fair due to the closely matched accumulated energies of the controllers, as shown in Fig. \ref{CLA}.

Fig. \ref{fig:example} shows that the current on the d-axis tracks a bounded reference derived from the voltage loop. Similarly to the voltage case, Fig. \ref{Current_Comparision_baselines} presents the current response for the baseline controllers under constant $dq$ reference values of \SI{1}{\ampere} and \SI{0}{\ampere}. The proposed method is seen to drive the $dq$ currents to the reference values with no overshoot, demonstrating the practical viability of the control scheme. It can also be seen that the proposed method converges faster than the other methods. The convergence times are $t_1 = 10$ ms, $t
_2 = 50$ ms, $t_3 = 80$ ms and $t_4 = 30$ ms for the proposed current controller, method from \cite{guler2023simplified}, \cite{eskandari2020finite} and \cite{luo2019adaptive} respectively. Based on the convergence times, the proposed method is seen to have $80 \%$, $87.5 \%$, and $66.7 \%$ improvement over the baseline controllers respectively. Futhermore, the controller remains robust during a 60 to 59 Hz grid-frequency drop over 2 s, maintaining accurate, undistorted AC-current tracking as shown in Fig.~\ref{frequency_drop}. Similarly, Fig. \ref{grid_resistance_var} shows the controller's resilience to parameter variations when the grid inductance and resistance vary by 10\%.

\begin{figure}[!htpb]
    \centering \includegraphics[scale=0.6]{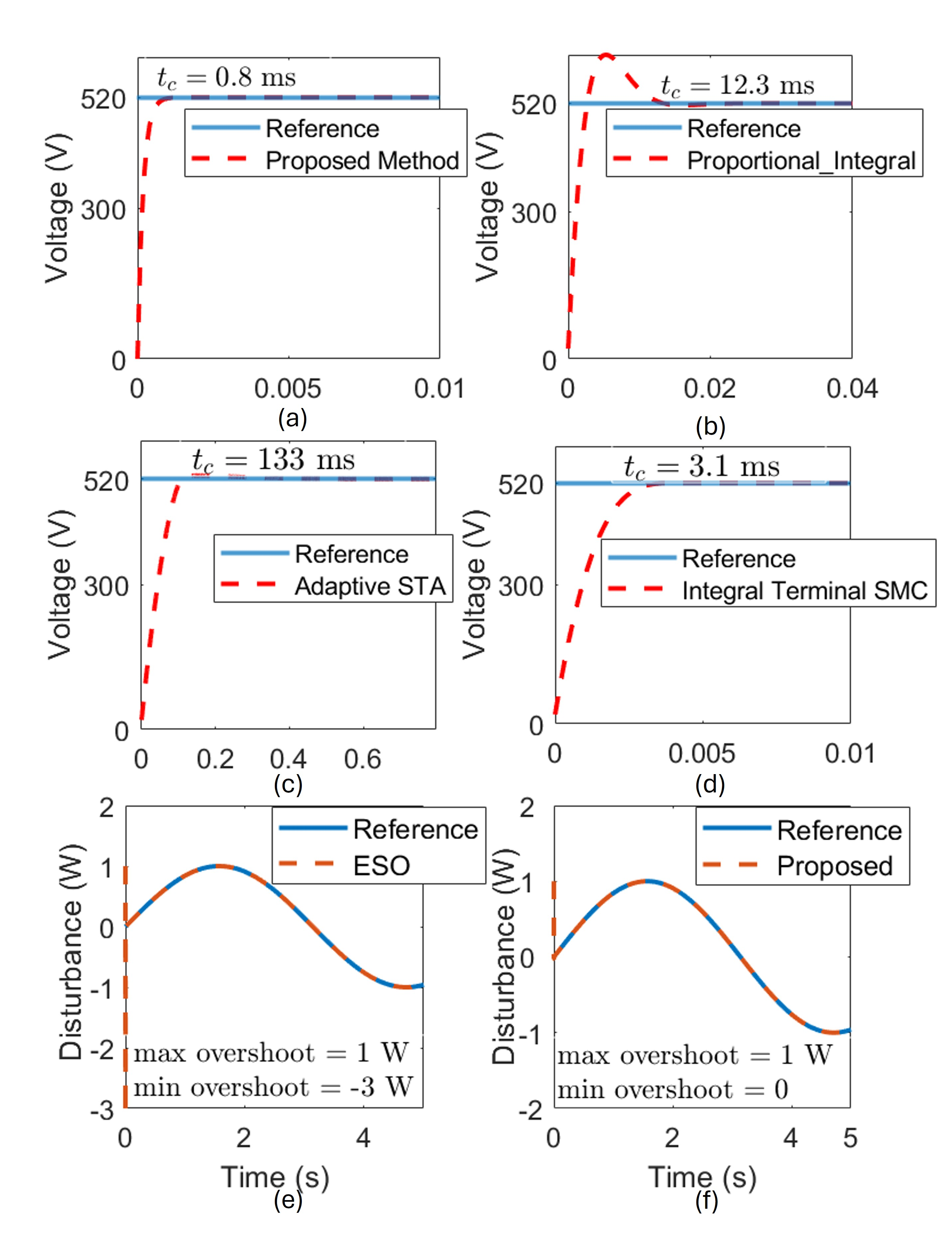}
\caption{DC-link voltage responses under varying load disturbances using (a) the proposed method, (b) the proportional–integral cascaded method, (c) the adaptive STA method, (d) the integral terminal SMC, (e) disturbance estimation using the ESO and (f) disturbance estimation using the proposed method.}
    \label{Voltage_reg}
\end{figure}

\begin{figure}[!htpb]
    \centering \includegraphics[scale=0.8]{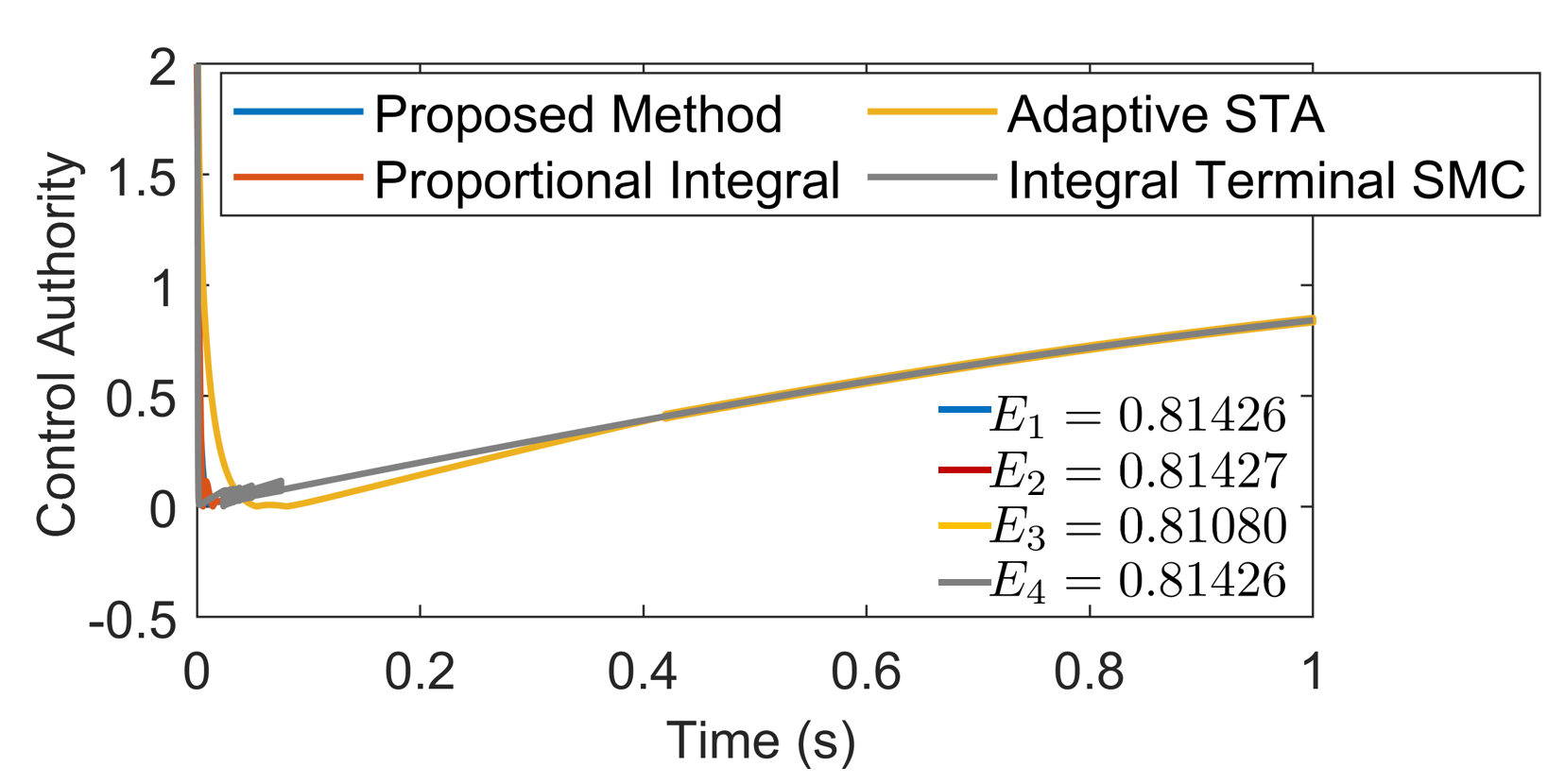}
    \caption{Comparison of the voltage control signals for the proposed method with existing methods. $E_1$, $E_2$, $E_3$ and $E_4$ are the respective control signal norms computed.}
    \label{CLA}
\end{figure}

\begin{figure}[!htpb]
    \centering \includegraphics[scale=0.9]{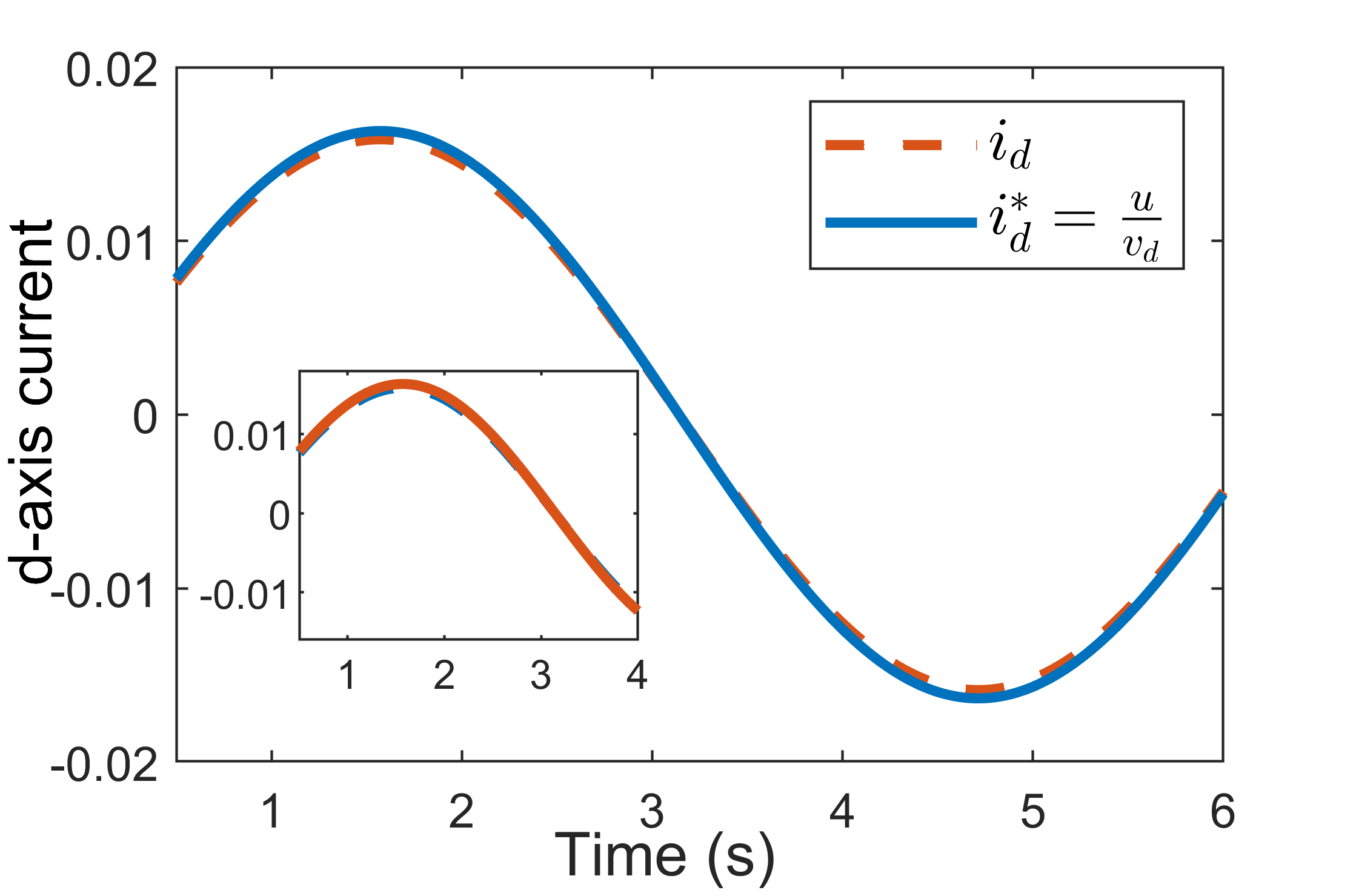}
    \caption{$d$-axis current response demonstrating effective tracking of the reference signal derived from the voltage regulation loop.}
    \label{fig:example}
\end{figure}

\begin{figure}[!htpb]
\centering
\includegraphics[scale=0.4]{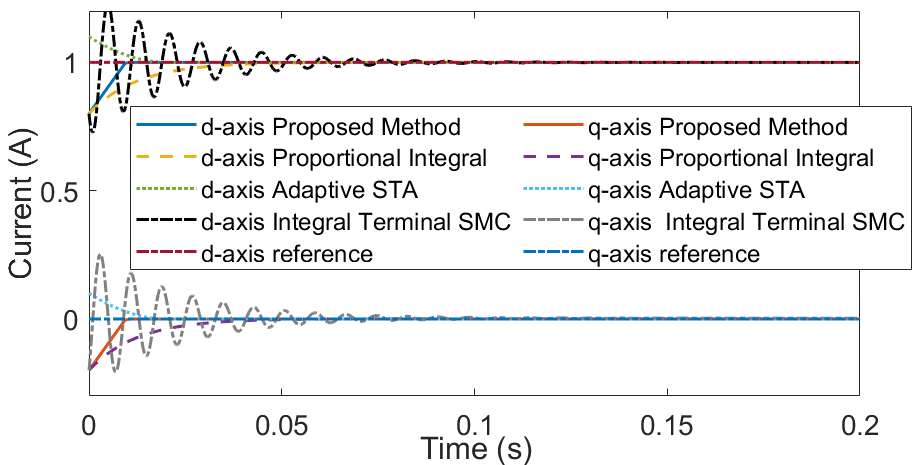} 
\caption{Current regulation comparison across the proposed method, the proportional–integral cascaded method, the adaptive STA method, and the integral terminal SMC.}  
\label{Current_Comparision_baselines}
\end{figure} 

 \begin{figure}[!htpb]
 \centering
\includegraphics[scale=0.5]{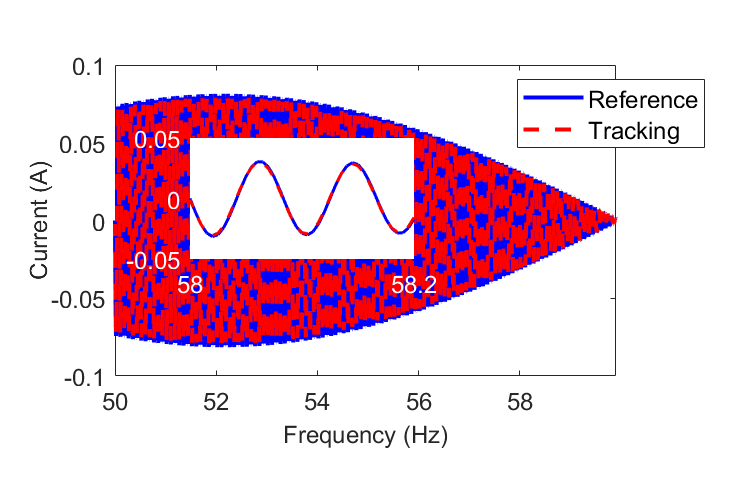} 
\caption{Response of Phase A current under a slow grid frequency drop - 60 to 59 Hz drop over 2 s.}
\label{frequency_drop}
\end{figure} 

\begin{figure}[!htpb]
 \centering
\includegraphics[scale=0.65]{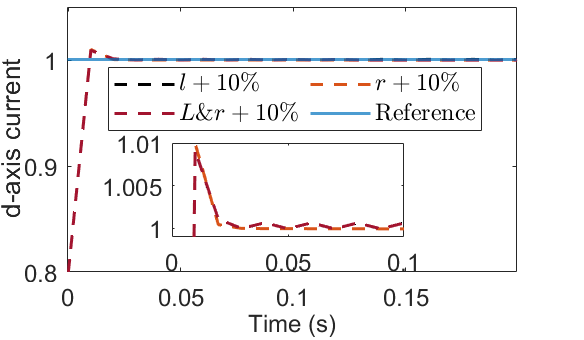} 
\caption{$d$-axis current response of the proposed controller considering $\pm10\%$ variations in grid inductance ($l$) and resistance ($r$). The results demonstrate accurate current tracking and robustness of the control law against parameter uncertainties.}
\label{grid_resistance_var}
\end{figure}

\subsection{Experimental Results}
The hardware validation of the proposed controllers was conducted using the SYNDEM reconfigurable research converters shown in Fig.~\ref{Setup}. The setup includes two SYNDEM converters, a DC source, host PC, power resistors, oscilloscope, RS-485 link, JTAG emulator, and all necessary cabling. Each converter contains a control board featuring a TI C2000 TMS320F28335 control card with signal conditioning circuitry and a power board with IGBT modules, passive components, sensors, relays, and diode bridges. The controllers were implemented on the host PC and deployed via JTAG, with RS-485 enabling serial communication. For safety and equipment protection, all hardware experiments were carried out at reduced laboratory voltage levels, which differ from the higher voltages used in simulation. In this configuration, one converter operated as an inverter and the other as a rectifier: the inverter converted a \SI{60}{\volt} DC input into a three-phase \SI{35}{\volt_{rms}} (L-L) AC output using bipolar switching values appropriate for low-voltage laboratory testing and this AC output then served as the input to the rectifier. After powering the system, waveforms were monitored on an oscilloscope, and various test scenarios were conducted to evaluate controller performance.
\begin{figure}[h]
    \centering \includegraphics[width = 0.7\textwidth]{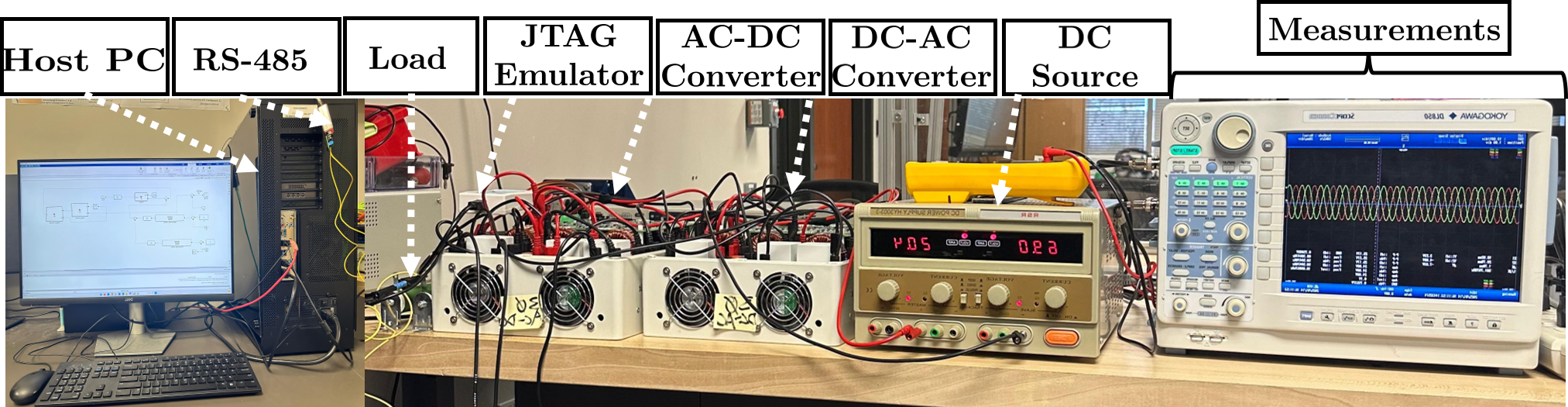}
    \caption{Laboratory experimental setup demonstrating the controller integration with the hardware rectifier circuit.}
    \label{Setup}
\end{figure}
\subsubsection{Scenario 1 - Step Voltage Reference} A laboratory step test is performed by increasing the DC voltage reference from \SI{30}{\volt} to \SI{40}{\volt} at \( t = 1 \) s, with the same procedure applied to the controller in \cite{guler2023simplified}. The resulting voltage responses are shown in Figs.~\ref{proposed_volt_reg} and \ref{pi_based_reg}. Both controllers track the step, but the PI Cascaded based method exhibits a rise time of \( t_r = 1.6 \) s, while the proposed regulator improves this by \( 0.2 \) s. Ripple magnitudes (pk–pk) for \( t > 1 \) s, along with rise time and steady-state error, are summarized in Table~\ref{Responsechar}, demonstrating that the proposed controller achieves lower ripple and smaller steady-state error than the baseline.

\begin{figure}[!htpb]
    \centering
    \includegraphics[scale=0.68]{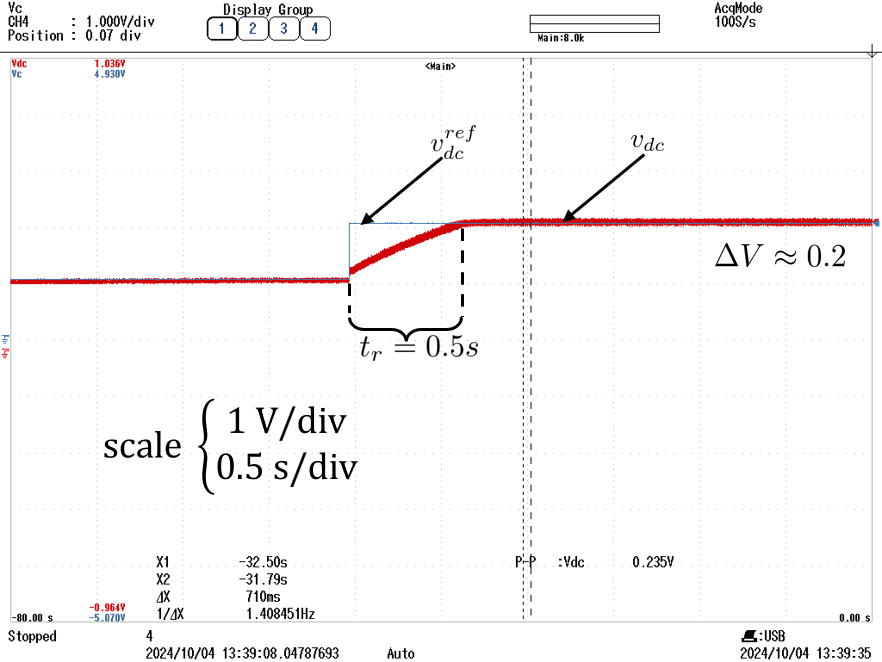}
       \caption{Experimental response of the proposed voltage regulator to a step change in the DC reference voltage.}
    \label{proposed_volt_reg}
\end{figure}
\begin{figure}[!htpb]
    \centering
\includegraphics[scale=0.63]{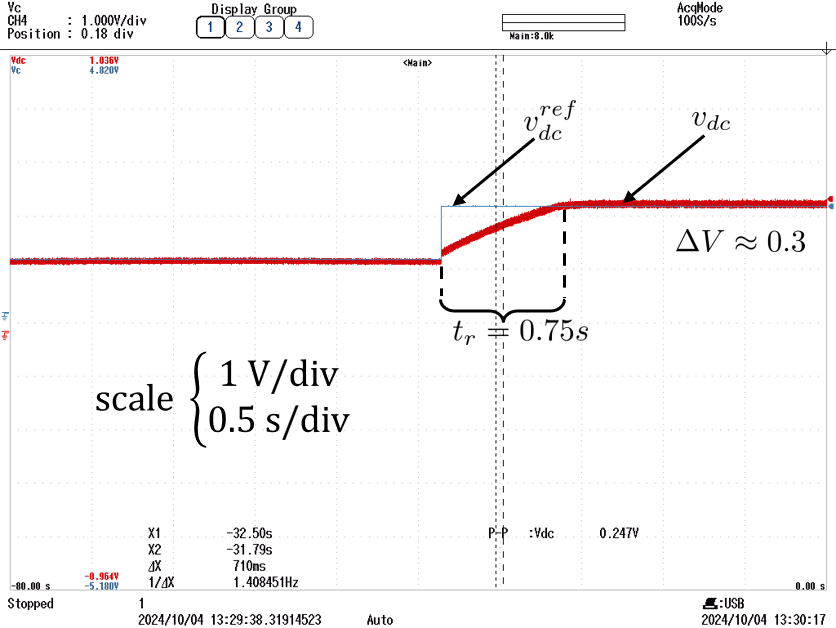}
       \caption{Experimental response of the method from \cite{guler2023simplified} to a step change in the DC reference voltage.}
    \label{pi_based_reg}
\end{figure}

\begin{table}[!htpb]
    \begin{center}
\caption{Experimental DC Voltage Response Performance.}
        \begin{tblr}{
          colspec={Q[4cm] | Q[2.8cm] | Q[2.8cm] | Q[2.8cm]},
          columns={halign=c},hlines
        }
        \hline
          Controller &  Ripple Magnitude ($\Delta V$) & Rise Time, $t_r$ (s) & Steady-State Error\\
Method from \cite{guler2023simplified} & 0.30 & 0.75 & 0.37\\
Proposed & 0.20 & 0.50 &0.25\\
Improvement($\%$) & 33.33 & 33.33 & 32.43\\
                 \hline      
        \end{tblr}
        \label{Responsechar}
    \end{center}
\end{table}

\subsubsection{Scenario 2 - Current Tracking under Constant Load} Next we present the rectifier grid currents output response under a constant $\SI{12}{\ohm}$ load with references obtained from the voltage control loop as done in simulation. The normalized response for phase A currents is shown in Fig. \ref{current_track_experiment} respectively as well as the chattering amplitudes in Fig. \ref{phase_a_chattering_resp}. As seen, the proposed current regulator was able to track the reference trajectory even as the reference changed in magnitude with low chattering amplitudes $(< 6\times 10^{-3})$.
\begin{figure}[!htpb]
    \centering
    \includegraphics[scale=0.25]{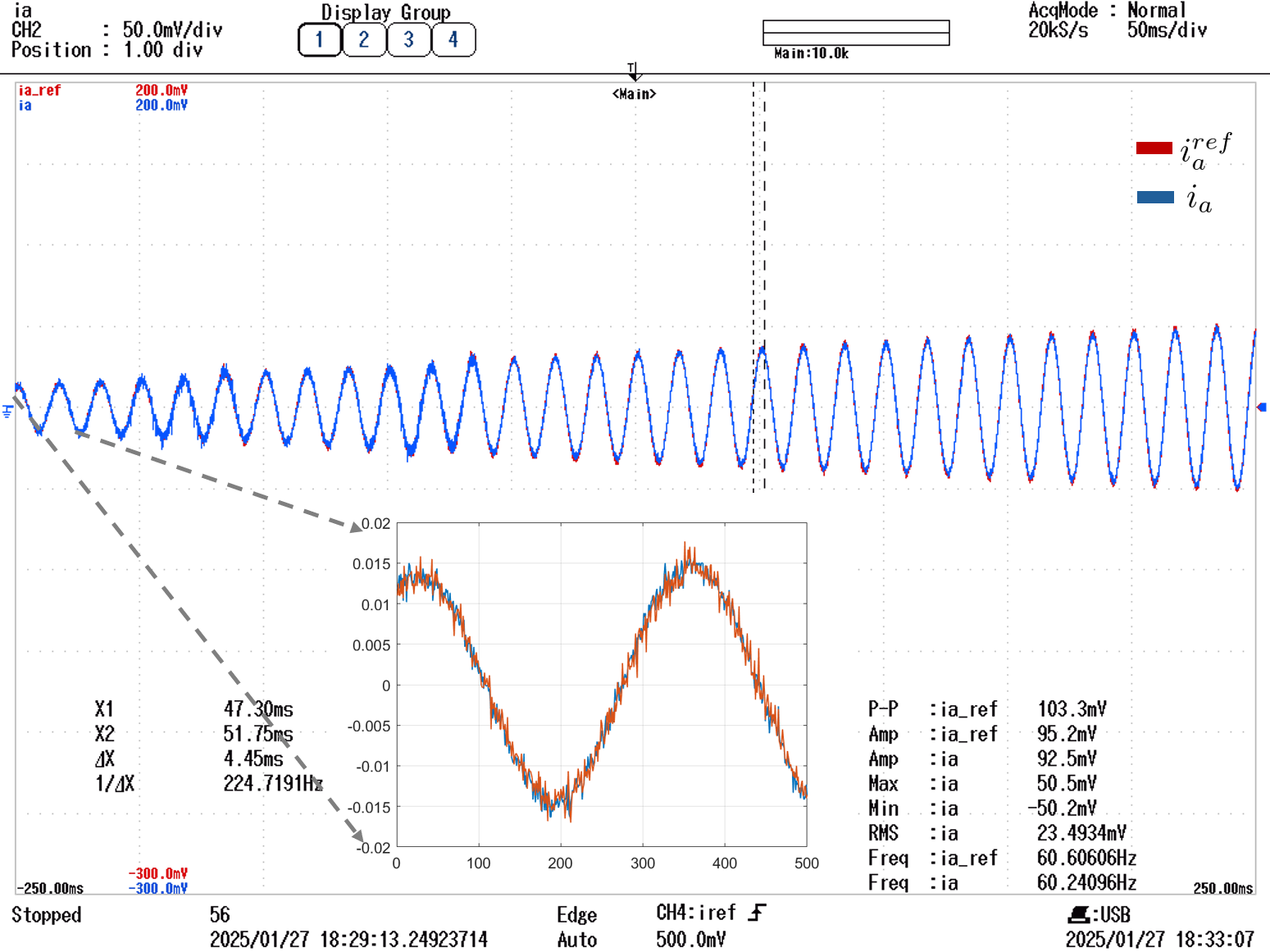}
       \caption{Normalized current tracking response for phase A with reference from voltage control loop.}
    \label{current_track_experiment}
\end{figure}

\begin{figure}[!htpb]
    \centering
\includegraphics[scale=0.65]{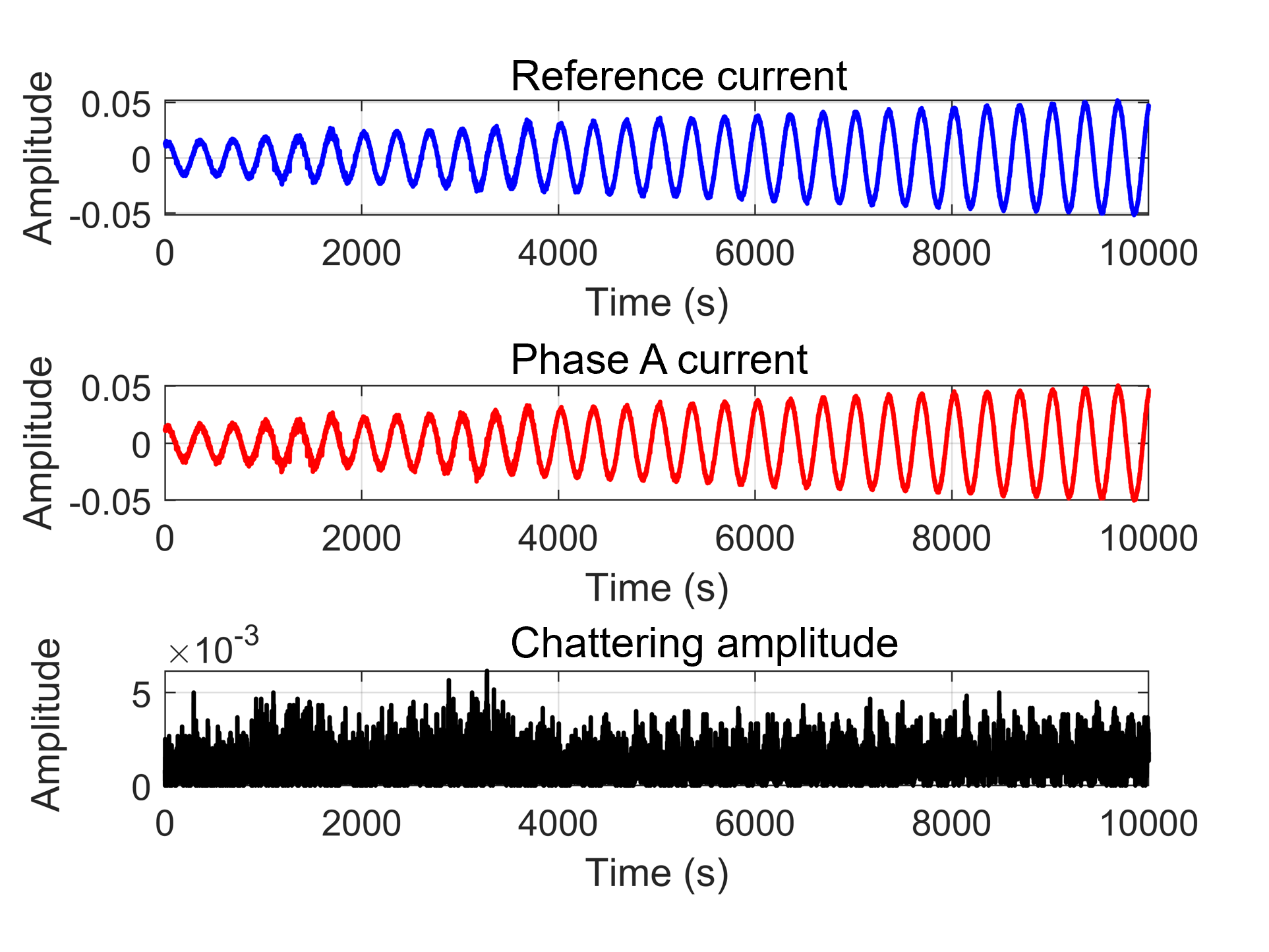}
       \caption{Reference current, measured Phase A current, and corresponding chattering amplitude under the proposed control scheme.}
      \label{phase_a_chattering_resp}
    \label{phase_a}
\end{figure}

\section{Conclusion}
This paper proposes a disturbance-adaptive non-singular finite-time control design for the regulation and stabilization of DC voltage and AC current in a rectifier system. Detailed stability analysis is provided, along with conditions for selecting appropriate controller gains. Numerical simulations and experimental results validate the effectiveness of the proposed methods, and demonstrates that the proposed controllers outperforms baseline approaches evaluated in this study in terms of reaching time, and steady-state error minimization. Based on the demonstrated results, the proposed controllers exhibit the potential for deployment in industrial electrical and electronic applications requiring accurate and fast AC voltage and current rectification. Additionally, they are well-suited for integration into consumer devices, where reliable and efficient rectification is essential to ensure optimal performance and energy efficiency. Future work will extend the design to incorporate grid-tied back-to-back converters. 

\section*{Acknowledgment}
This work was supported by the U.S. Department of Energy (Award DE-CR0000028).

\section*{Declaration of competing interests}
Conflict of interest – none declared.

\bibliographystyle{IEEEtran}
\bibliography{myreferences}

\vspace{0.5in}
\textbf{Appendix}
\begin{proof}[Proof of Lemma 1]
From \eqref{d1}--\eqref{d2}, differentiating $y = \eta_f + \sigma z$ gives
$\dot{y} + \sigma y = \sigma \dot{z}$.
Defining the derivative filter error $e = y - \dot{z}$ and integrating, we obtain
\[
e(t) = e^{-\sigma t} e(0) + e^{-\sigma t} \int_0^t \ddot{z}(\tau) e^{\sigma \tau} d\tau.
\]
Since $|\ddot{z}|\le \epsilon$, this implies $
|e(t)| \le e^{-\sigma t}|e(0)| + \frac{\epsilon}{\sigma},
$
so that $\lim_{t\to\infty}|e(t)| \le \frac{\epsilon}{\sigma}$.
\end{proof}

\begin{proof}[Proof of Lemma 2]
Let $s_i$ denote the $i$-th component of $\mathbf{s} \in \mathbb{R}^n$, and let $\epsilon > 0$. Then, we have  
\begin{align*}
    \mathbf{s}^\top \textsf{sat}\left( \frac{{\mathbf{s}}}{\epsilon} \right) &=  
    \sum_{i=1}^{n} \left\{  
    \begin{array}{cc}  
        \frac{{s}_i^2}{\epsilon}, & |{s}_i| < \epsilon \\  
        |{s}_i|, & \text{otherwise}  
    \end{array}  
    \right. \\  
    &\geq \sum_{i=1}^{n} \min \left\{ \frac{{s}_i^2}{\epsilon}, |{s}_i| \right\} \\  
    &\geq \min \left\{ \frac{\|\mathbf{s}\|_2^2}{\epsilon}, \|\mathbf{s}\|_1 \right\} \\  
    &\geq \min \left\{ \frac{\|\mathbf{s}\|_2^2}{\epsilon}, \sqrt{n} \|\mathbf{s}\|_2 \right\} \\  
    &\geq \sqrt{n} \|\mathbf{s}\|_2, \quad \text{for all} 
  \hspace{1mm} \|\mathbf{s}\|_2 \leq \epsilon \sqrt{n}.  
\end{align*}  
\end{proof}           

\end{document}